\renewcommand*{\backref}[1]{}
\renewcommand*{\backrefalt}[4]{
  \ifcase #1 
  [No citations.]
  \or [#2]
  \else [#2]
  \fi }
\let\originalleft\left
\let\originalright\right
\renewcommand{\left}{\mathopen{}\mathclose\bgroup\originalleft}
\renewcommand{\right}{\aftergroup\egroup\originalright}
\theoremstyle{plain}
\newtheorem{theorem}{Theorem}[section]
\newtheorem{lemma}[theorem]{Lemma}
\newtheorem{corollary}[theorem]{Corollary}
\newtheorem{proposition}[theorem]{Proposition}
\newtheorem{question}[theorem]{Question}
\newtheorem*{claim*}{Claim}
\newtheorem{claim}[theorem]{Claim}
\newtheorem*{case*}{Case}
\theoremstyle{definition}
\newtheorem{definition}[theorem]{Definition}
\newcommand{\reflem}[1]{Lemma~\ref{L:#1}}
\newcommand{\refthm}[1]{Theorem~\ref{T:#1}}
\newcommand{\refclm}[1]{Claim~\ref{C:#1}}
\newcommand{\refsec}[1]{Section~\ref{S:#1}}
\newcommand{\refprop}[1]{Proposition~\ref{P:#1}}
\newcommand{\Aut}{\mathop{\rm Aut}} 
\newcommand{\Id}{\mathop{\rm Id}} 
\newcommand{\from}{\colon} 
\newcommand{\st}{\mathbin{\mid}} 
\newcommand{\calF}{\mathcal{F}}
\newcommand{\calG}{\mathcal{G}}
\newcommand{\calP}{\mathcal{P}}
\newcommand{\calQ}{\mathcal{Q}}
\newcommand{\calR}{\mathcal{R}}
\newcommand{\HH}{\mathbb{H}}
\newcommand{\RR}{\mathbb{R}}
\def\Isom{\mathrm{Isom}}
\def\MF{\mathcal{MF}}
\def\Q{\mathcal{Q}}
\def\PMF{\mathbb P \mathcal{MF}}
\def\T{\mathcal{T}}
\def\Mod{\mathrm{Mod}}
\def\M{\mathcal{M}}
\def\C{\mathcal{C}}
\def\qi{\raise1.6ex\hbox{\tiny$A$,$B$}\mspace{-25mu}\asymp}
\title{Hyperbolic spaces in Teichm\"uller spaces\thanks{This work is
    in the public domain.  The first author was supported by NSF grants
    DMS 0905748 and DMS 1207183.  The second author was supported by EPSRC grant
    EP/I028870/1.}}  
\renewcommand\footnotemark{} 
\author{Christopher J.~Leininger and Saul Schleimer}
\begin{document}
\maketitle 

\abstract{We prove, for any $n$, that there is a closed connected
orientable surface $S$ so that the hyperbolic space $\HH^n$
almost-isometrically embeds into the Teichm\"uller space of $S$, with
quasi-convex image lying in the thick part.  As a consequence, $\HH^n$
quasi-isometrically embeds in the curve complex of $S$.}


\section{Introduction}

We denote the Teichm\"uller space of a surface $S$ by $\T(S)$, and the
$\epsilon$--thick part by $\T_\epsilon(S)$; see \refsec{teichmuller}.
An {\em almost-isometric embedding} of one metric space into another
is a $(1,C)$--quasi-isometric embedding, for some $C \geq 0$; see
\refsec{hyperbolic}.  Let $\HH^n$ denote hyperbolic $n$--space.  The
main result of this paper is the following.

\begin{theorem}
\label{T:main}
For any $n \geq 2$, there exists a surface of finite type $S$ and an
almost-isometric embedding
\[ 
\HH^n \to \T(S).
\]
Moreover, the image is quasi-convex and lies in $\T_\epsilon(S)$ for
some $\epsilon > 0$.
\end{theorem}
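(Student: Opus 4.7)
My plan is to build $S$ as a finite (possibly branched) cover $p \from S \to \mathcal{O}$ of a small base $2$--orbifold $\mathcal{O}$, and to factor the desired embedding as
\[
\HH^n \to \T(\mathcal{O}) \hookrightarrow \T(S).
\]
The second map, induced by the cover, is a totally geodesic isometric embedding onto the fixed-point set of the deck group acting on $\T(S)$ (by classical results of Kravetz and Royden), so no additional distortion is introduced at that stage.  The task then reduces to producing an almost-isometric, quasi-convex embedding $\HH^n \to \T(\mathcal{O})$ with image in $\T_\epsilon(\mathcal{O})$, and verifying that all three properties survive the lift (which they do, since the lift is an isometry onto its image and $\mathcal{O}$--thickness is comparable to $S$--thickness on the fixed locus).

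To build $\HH^n \hookrightarrow \T(\mathcal{O})$, I would fix a cocompact lattice $\Gamma \subset \Isom(\HH^n)$ (e.g. arithmetic, coming from a right-angled Coxeter reflection orbifold) and seek a faithful representation $\rho \from \Gamma \to \Mod(\mathcal{O})$ so that the $\rho(\Gamma)$--orbit of a basepoint in $\T(\mathcal{O})$ is a $(1,C)$--quasi-isometric copy of $\Gamma$ (hence of $\HH^n$) lying in $\T_\epsilon(\mathcal{O})$ and equivariantly interpolated by Teichm\"uller geodesics.  The natural source of such a $\rho$ is geometric: exhibit a surface bundle $\Sigma \to E \to \HH^n/\Gamma$ whose monodromy is $\rho$, so that the classifying map to moduli space lifts to a $\Gamma$--equivariant map $\HH^n \to \T(\mathcal{O})$.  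The $n=2$ case is the classical setting of a Teichm\"uller disk or a Kleinian surface group, which suggests generalising to $n \geq 3$ by starting from a reflection polytope in $\HH^n$ and assembling the disks corresponding to its facets.

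The main obstacle is existence and quantitative control of such a $\rho$.  The Teichm\"uller metric is not CAT($0$) and admits flats of high rank (Masur), so isometric copies of $\HH^n$ are rigid and rare, and this is what makes the word \emph{almost}-isometric in the statement essential rather than cosmetic: one cannot realistically hope to cut out a totally geodesic $\HH^n$ of arbitrary dimension.  I therefore expect the argument to exploit the $(1,C)$ slack by first producing a coarse equivariant net $\Gamma \cdot x_0 \subset \T(\mathcal{O})$ and then promoting it to a continuous map by piecewise Teichm\"uller geodesics, with the critical estimate being an additive comparison of distances between orbit points on $\HH^n$ versus $\T(\mathcal{O})$ over a fundamental domain for $\Gamma$.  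Once the net and interpolation are set up, quasi-convexity of the image and containment in the thick part should follow from cocompactness of the $\Gamma$--action on the image together with standard Teichm\"uller geodesic stability in the thick part.
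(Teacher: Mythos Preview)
Your proposal has a genuine gap, and it is precisely the gap the paper is written to circumvent.  You propose to find a cocompact lattice $\Gamma < \Isom(\HH^n)$ and a faithful $\rho \from \Gamma \to \Mod(\mathcal{O})$ whose orbit map is a $(1,C)$--quasi-isometric embedding with quasi-convex image in the thick part; you suggest obtaining $\rho$ as the monodromy of a surface bundle over $\HH^n/\Gamma$.  But by the work of Farb--Mosher and Hamenst\"adt cited in the introduction, a $\Gamma$--equivariant quasi-isometric embedding $\HH^n \to \T(S)$ with quasi-convex image in the thick part exists if and only if the total space of that bundle has Gromov hyperbolic fundamental group.  For $n \geq 3$ the existence of such bundles is the open Question~stated in the introduction; for $n = 2$ the known convex-cocompact examples are not lattices, and the lattice examples constructed in \cite{clm} are explicitly noted not to have quasi-convex image.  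So the ``existence and quantitative control of such a $\rho$'' that you flag as the main obstacle is not a technicality to be overcome---it is an open problem, and your outline gives no mechanism for producing $\rho$.

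The paper's actual strategy discards equivariance entirely, which is exactly the point emphasised after the Question in the introduction.  The construction is inductive in $n$ and uses the Bers fibration rather than any group action: for $n=2$ one fixes a single thick Teichm\"uller geodesic in $\T(Z)$ defined by a quadratic differential $q$, and maps the horocycle coordinate $x$ of $\HH^2$ to a point-push of a marked point $z$ along a nonsingular leaf of the vertical foliation of $q$, and the vertical coordinate $t$ to the Teichm\"uller flow.  For $n \geq 3$ one takes a branched cover $\Sigma \to Z$ (nontrivially branched over $z$), uses the induced isometric embedding $\T(Z,z) \to \T(\Sigma)$, adds a new marked point on $\Sigma$ on a fresh leaf, and pushes that point with the new coordinate; iterating yields $\HH^n$.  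The key technical device for verifying almost-isometry is Lemma~\ref{L:criteria}, together with Rafi's coboundedness criterion (Theorem~\ref{T:Rafi}) and Kra's estimate on fibres (Theorem~\ref{T:kra}); none of this involves a lattice or an equivariant map.
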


According to \refprop{isometric-teich} below, Theorem~\ref{T:main}
remains true if we replace ``surface of finite type'' with ``closed
surface''.  Our work is motivated, in part, by the following open
question (see \cite{FMcc} for the case $n = 2$).
\begin{question}
Does there exist a closed surface $S$ of genus at least $2$, a closed
hyperbolic $n$--manifold $B$ with $n \geq 2$, and an $S$--bundle $E$
over $B$ for which $\pi_1(E)$ is Gromov hyperbolic?
\end{question}

\noindent
To explain the relationship with our theorem, suppose that
\[ 
S \to E \to B 
\]
is an $S$--bundle over $B =\HH^n/\Gamma$, for some closed surface $S$
and some torsion free cocompact lattice $\Gamma < \Isom(\HH^n)$.
The monodromy is a homomorphism to the mapping class group of $S$,
$\rho \from \pi_1(B)= \Gamma \to \Mod(S)$.  The mapping class group
$\Mod(S)$ acts on $\T(S)$ by isometries with respect to the
Teichm\"uller metric, and according to work of Farb-Mosher \cite{FMcc}
and Hamenst\"adt \cite{hamenstadt}, $\pi_1(E)$ is $\delta$-hyperbolic
if and only if we can construct a $\Gamma$--equivariant
quasi-isometric embedding
\[ 
f \from \HH^n \to \T(S) 
\]
with quasi-convex image lying in $\T_\epsilon(S)$ for some $\epsilon >
0$; see also \cite{mjsardar}.  (In fact the $\Gamma$--equivariance and
quasi-isometric embedding assumptions imply that the image lies in
$\T_\epsilon(S)$.)


Our main theorem states that if we drop the assumption of
equivariance, then quasi-isometric embeddings with all the remaining
properties exist.  On the other hand, as was shown in \cite{clm}, one
can find cocompact lattices $\Gamma < \Isom(\HH^2)$ and
$\Gamma$--equivariant quasi-isometries into $\T(S)$ with image in
$\T_\epsilon(S)$---for these examples the image is not quasi-convex.

The main theorem for $n = 2$ also contrasts with the situation of
isometrically embedding hyperbolic planes in $\T(S)$.  More precisely,
every geodesic in $\T(S)$ is contained in an isometrically embedded
hyperbolic plane (with the Poincar\'e metric) called a Teichm\"uller
disk.  However, it is well-known that {\em no} Teichm\"uller disk lies
in any thick part---this follows from \cite{masurclosedtrajectories}
which guarantees that along a dense set of geodesic rays in the
Teichm\"uller disk the hyperbolic length of some curve on $S$ tends to
zero.

The curve complex of $S$ is a metric simplicial complex $\C(S)$ whose
vertices are isotopy classes of essential simple closed curves, and
for which $k+1$ distinct isotopy classes of curves span a $k$--simplex
if they can be realized disjointly.  In \cite{MM1}, Masur and Minsky
proved that $\C(S)$ is $\delta$--hyperbolic.  One of the key
ingredients in their proof is the construction of a coarsely Lipschitz
map $\T(S) \to \C(S)$.  The restriction of this map to any
quasi-convex subset of $\T_\epsilon(S)$ is a quasi-isometry (see for
example \cite[Lemma 4.4]{rafischleimer} or \cite[Theorem
7.6]{shadows}).  Composing the almost-isometry of Theorem~\ref{T:main}
with the map $\T(S) \to \C(S)$ we have the following corollary.

\begin{corollary} 
\label{C:C(S)} 
For every $n \geq 2$, there exists a surface of finite type $S$ and a
quasi-isometric embedding
\[ 
\HH^n \to \C(S).
\]
\end{corollary}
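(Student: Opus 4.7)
The plan is to derive this corollary by composing two maps: the almost-isometric embedding produced by \refthm{main} and the standard coarsely Lipschitz map from the Teichm\"uller space to the curve complex. The key point is that although the map $\T(S) \to \C(S)$ is highly degenerate on $\T(S)$ as a whole (it collapses enormous regions of the thin part), its restriction to a quasi-convex subset of the thick part is well-behaved.

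First, I would invoke \refthm{main} to obtain, for the given $n \geq 2$, a surface of finite type $S$, a constant $\epsilon > 0$, and an almost-isometric embedding $f \from \HH^n \to \T(S)$ whose image $f(\HH^n)$ is quasi-convex and contained in $\T_\epsilon(S)$. In particular $f$ is a $(1,C)$--quasi-isometric embedding for some $C \geq 0$, hence also a quasi-isometric embedding.

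Next, I would recall from \cite{MM1} the construction of a coarsely Lipschitz map $\pi \from \T(S) \to \C(S)$, for example the map sending a point $X \in \T(S)$ to (any vertex in) the set of shortest curves on $X$. The crucial input is the result, stated in the introduction and attributed to \cite[Lemma 4.4]{rafischleimer} and \cite[Theorem 7.6]{shadows}, that the restriction of $\pi$ to any quasi-convex subset of $\T_\epsilon(S)$ is a quasi-isometry onto its image. Applying this to the quasi-convex subset $f(\HH^n) \subset \T_\epsilon(S)$, we see that $\pi|_{f(\HH^n)}$ is a quasi-isometric embedding.

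Finally, the composition $\pi \circ f \from \HH^n \to \C(S)$ is a quasi-isometric embedding, since the composition of two quasi-isometric embeddings is again a quasi-isometric embedding (with constants determined by the original constants). This is the desired map. There is essentially no serious obstacle here beyond verifying that the hypotheses of the quoted restriction result apply, which they do verbatim: quasi-convexity and containment in $\T_\epsilon(S)$ are precisely the conclusions provided by \refthm{main}.
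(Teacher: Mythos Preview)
Your proposal is correct and follows essentially the same approach as the paper: compose the almost-isometric embedding of \refthm{main} with the Masur--Minsky map $\T(S)\to\C(S)$, and invoke the cited fact that this map restricted to any quasi-convex subset of $\T_\epsilon(S)$ is a quasi-isometric embedding. The paper presents this argument in a single sentence in the introduction rather than as a separate proof, but the content is identical.
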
 

The case of $n=2$ here can be compared to the result of Bonk and
Kleiner \cite{bonkkleiner} in which it is shown that every
$\delta$--hyperbolic group which is not virtually free contains a
quasi-isometrically embedding hyperbolic plane.  The assumption that
the group is not virtually free implies the existence of an arc in the
boundary.  According to \cite{gabai} (see also
\cite{leiningerschleimer,lms}) with the exception of a few small
surfaces, there are indeed arcs in the boundary of $\C(S)$.  In
\cite{bonkkleiner} however, essential use is made of the fact that
there is an action of the group, and so even in the case $n = 2$,
Corollary \ref{C:C(S)} does not follow from \cite{bonkkleiner}.

We now explain the idea for the construction in the case $n = 2$.
Given a closed Riemann surface $Z$ and a point $z \in Z$, the
Teichm\"uller space $\T(Z,z)$ is naturally a $\HH^2$--bundle
over $\T(Z)$; see \refsec{forget}.  Given a biinfinite geodesic
$\tau$ in $\T(Z)$, the preimage of $\tau$ in $\T(Z,z)$ is a
$3$--manifold.  The parameterization $t \mapsto \tau(t)$ lifts to a
flow on the preimage of $\tau$ for which the flow lines are geodesics
in $\T(Z,z)$.  The fiber over $\tau(0)$ admits a pair of transverse
$1$--dimensional singular foliations---these are naturally associated
to the vertical and horizontal foliations of the quadratic
differential defining $\tau$.  Any two flow lines meeting the same
nonsingular leaf of the vertical foliation are forward asymptotic.
Therefore, we have a $1$--parameter family of forward asymptotic
geodesics in $\T(Z,z)$.  We use this to define a map from $\HH^2$ to
$\T(Z,z)$: we pick a horocycle $C \subset \HH^2$ and send the pencil
of geodesics perpendicular to $C$ to our set of forward asymptotic
geodesics in $\T(Z,z)$.

At the beginning of \refsec{H^3} we give a brief explanation of how
this can be modified to give the construction for $n = 3$. The idea
for $n \geq 4$ is then a straightforward inductive construction.\\

\noindent {\bf Acknowledgements.} We thank Richard Kent for useful
conversations as well as having originally asked about the existence
of quasi-isometric embeddings of hyperbolic planes into $\C(S)$.  We
thank the referee for their comments. 

\section{Hyperbolic geometry} 
\label{S:hyperbolic}

Suppose that $(X, d_X)$ and $(Y, d_Y)$ are metric spaces.

\begin{definition}
\label{Def:AlmostIsom}
A map $F \from X \to Y$ is a {\em $K$--almost-isometric embedding} if
for all $x, x' \in X$ we have
\[
|d_X(x,x') - d_Y(F(x),F(x'))| \leq K.
\]
\end{definition}

\noindent
We use the {\em exponential model} for hyperbolic space: $\HH^n =
\RR^{n-1} \times \RR$ with length element
\[
ds^2 = e^{-2t}\left(dx_1^2 + \ldots + dx_{n-1}^2\right) + dt^2.
\]
For two points $p, q \in \HH^n$ we use $d_\HH(p,q)$ to denote the
distance between them.  The exponential model of hyperbolic space is
related to the upper-half space model $U = \RR^{n-1} \times
(0,\infty)$ by the map $\HH^n \to U$ given by $(x,t) \mapsto (x,e^t)$.
In the exponential model, for every $x \in \RR^{n-1}$ the path
$\eta_x(t) = (x,t)$ is a {\em vertical geodesic} and is parameterized
by arc-length.

\begin{lemma} 
\label{L:criteria}
Suppose $(X,d_X)$ is a geodesic metric space and $\delta, \epsilon, R
> 0$ are constants. Suppose $F \from \HH^n \to X$ is a function with
the following properties.
\begin{enumerate}
\item
\label{L:crit:geod} 
$F \circ \eta_x$ is a geodesic for all $x \in \RR^{n-1}$.
\item 
\label{L:crit:slim}
For distinct $x, x' \in \RR^{n-1}$ the geodesics $F \circ \eta_x$ and
$F \circ \eta_{x'}$ are two sides of an ideal $\delta$--slim triangle
in $(X,d_X)$.
\item 
\label{L:crit:unif}
For any $x, x' \in \RR^{n-1}$ if $e^{-t}|x-x'| < \epsilon$ then
$d_X(F(x,t),F(x',t)) \leq R$.
\item 
\label{L:crit:prop} 
If $(x_k,t_k),(x_k',t_k) \in \HH^n$ satisfy $\displaystyle{\lim_{k \to
\infty}e^{-t_k}|x_k - x_k'| = \infty}$, then \\ $\displaystyle{\lim_{k
\to \infty} d_X\left(F(x_k, t_k), F(x_k', t_k)\right) = \infty}$.
\end{enumerate}
Then there exists a constant $K$ so that $F$ is a $K$--almost
isometric embedding.
\end{lemma}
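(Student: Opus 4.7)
The plan is to establish the two bounds $|d_X(F(p), F(q)) - d_\HH(p, q)| \le K$ simultaneously. When $p$ and $q$ lie on a common vertical geodesic this is immediate from~(\ref{L:crit:geod}), so I assume $p = (x, t)$ and $q = (x', t')$ with $x \ne x'$. Write $L = |x - x'|$ and $T_1 = \log(L/\epsilon)$, the ``neck height'' above which~(\ref{L:crit:unif}) applies. The vertical geodesics $\eta_x, \eta_{x'}$ span an isometric copy of $\HH^2 \subset \HH^n$, and by~(\ref{L:crit:slim}) their $F$-images $A = F \circ \eta_x$ and $B = F \circ \eta_{x'}$ are two sides of an ideal $\delta$-slim triangle in $X$ with third side $C$, sharing the forward ideal vertex $\xi$.

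For the upper bound I would use the three-segment piecewise path in $\HH^n$ from $p$ vertically to $(x, T)$, across to $(x', T)$, then vertically to $q$, where $T = \max(t, t', T_1)$. By~(\ref{L:crit:geod}) and~(\ref{L:crit:unif}) the $F$-image has length at most $(T - t) + R + (T - t')$. Comparing this with the explicit exponential-model formula
\[
d_\HH((x, s), (x', t)) = \cosh^{-1}\!\bigl(\cosh(s - t) + L^2 e^{-(s + t)}/2\bigr)
\]
and using $2\sinh^{-1}(z/2) \ge 2\log z$ for $z \ge 1$ yields $d_X(F(p), F(q)) \le d_\HH(p, q) + K_1$ with $K_1$ depending only on $R$ and $\epsilon$.

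For the lower bound I would combine two estimates on $d_X(A(s), B(t))$. First, the Busemann function $\beta_\xi$ is $1$-Lipschitz and satisfies $\beta_\xi(A(s)) = -s + c_A$ and $\beta_\xi(B(t)) = -t + c_B$; hypothesis~(\ref{L:crit:unif}) at height $T_1$ forces $|c_A - c_B| \le R$, giving $d_X(A(s), B(t)) \ge |s - t| - R$. Second, hypothesis~(\ref{L:crit:prop}) rules out the degeneration in which the backward ideal vertices of $A$ and $B$ coincide (which would force $A$ and $B$ to uniformly fellow-travel), so the triangle has three distinct ideal vertices $\xi, \xi_A, \xi_B$. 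Slimness then forces $A(s)$ and $B(t)$ into $\delta$-neighborhoods of disjoint ends of $C$ (approaching $\xi_A$ and $\xi_B$) once $s, t$ are sufficiently less than $T_1$, and since $C$ is a geodesic this yields $d_X(A(s), B(t)) \ge 2T_1 - s - t - K_3$ in that regime.

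Taking the maximum of these two lower bounds, and using the elementary estimate $d_\HH((x, s), (x', t)) = \max(|s - t|,\, 2T_1 - s - t + 2\log\epsilon) + O(1)$ (immediate from the closed form of $d_\HH$), gives $d_X(F(p), F(q)) \ge d_\HH(p, q) - K_2$. The main obstacle is the lower bound: one must pin down, up to bounded additive error, the arc-length correspondence between $C$ and the asymptotic sides $A, B$. This combines the bounded shift from~(\ref{L:crit:unif}) at height $T_1$ with the standard slim-triangle fact that the nearest-point projection of $A(s)$ onto $C$ matches arc length up to bounded additive error for $s$ sufficiently negative.
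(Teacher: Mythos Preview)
Your overall strategy --- compare the ideal triangle $A \cup B \cup C$ in $X$ to the corresponding ideal triangle in $\HH^n$, with an upper bound from a piecewise path through height $T_1$ and a two-part lower bound --- is the same as the paper's.  Your upper bound is fine, and your Busemann argument for the bound $d_X(A(s),B(t)) \ge |s-t| - R$ is a clean alternative to part of the paper's case analysis (the paper does not use Busemann functions explicitly).

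The gap is in your second lower bound $d_X(A(s),B(t)) \ge 2T_1 - s - t - K_3$.  You need $K_3$ to be \emph{uniform in $x,x'$}, and your use of hypothesis~(\ref{L:crit:prop}) is too weak to give this.  Applying~(\ref{L:crit:prop}) to \emph{fixed} $x,x'$ with $t_k \to -\infty$ does show that the backward ideal vertices of $A$ and $B$ are distinct, hence that \emph{some} finite $K_3(x,x')$ exists; but nothing you wrote bounds $K_3$ independently of the pair.  Concretely: your bound requires knowing that the height at which $A$ and $B$ stop $\delta$-fellow-traveling each other (and start $\delta$-fellow-traveling $C$) lies within a uniform additive constant of $T_1$.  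Without this, there could be pairs $x,x'$ for which $A$ and $B$ remain $\delta$-close far below $T_1$, and on that range only your Busemann bound applies --- which is too weak when $s \approx t \ll T_1$.

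This is exactly the content of the paper's \refclm{UniformBoundOnDisplacement}: defining the displaced height $h_T$ as the least height at which $F(\calP)$ and $F(\calQ)$ come within $\Delta$ of each other, the paper shows $t(r) - h_T \le C_0$ uniformly.  The argument is by contradiction and uses the full strength of~(\ref{L:crit:prop}): if a sequence of triangles $T_k$ had $t(r_k) - h_{T_k} \to \infty$, then $e^{-h_{T_k}}|x_k - x_k'| = 2e^{t(r_k)-h_{T_k}} \to \infty$, so~(\ref{L:crit:prop}) forces $d_X\bigl(F(x_k,h_{T_k}),F(x_k',h_{T_k})\bigr) \to \infty$ --- contradicting the fact (\refclm{SlimAbove}) that at height $h_{T_k}$ these points are within $3\Delta$.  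Your ``standard slim-triangle fact'' about arc-length correspondence is correct \emph{once} this uniform anchoring of the triangle center is in hand, but it does not supply the anchoring on its own.
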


A useful consequence of Property~\ref{L:crit:unif} is that for any
$x, x', t \in \RR$ we have
\begin{equation} 
\label{E:alt3} 
d\left(F(x,t),F(x',t)\right) \leq \frac{R}{\epsilon}e^{-t}|x-x'| + R. 
\end{equation} 

\noindent
The remainder of this section gives the proof of \reflem{criteria}.
We begin by controlling how $F$ moves the centers of ideal triangles.
To be precise: Suppose that $T = \calP \cup \calQ \cup \calR \subset
\HH^n$ is an ideal triangle where $\calP$ and $\calQ$ are distinct
vertical geodesics.  Let $r$ denote the point of $\calR$ with maximal
$t$--coordinate.  We call $r$ the {\em midpoint} of $\calR$.  Thus $r$
serves as a center for $T$.  Define $x = x(\calP), x' = x(\calQ)$.

Observe, say from the upper-half space model, that for all $t \geq
t(r)$ we have
\begin{equation} 
\label{E:horocycledist} 
d_\HH\left((x,t), (x',t)\right) \leq e^{-t}|x-x'| \leq e^{-t(r)}|x-x'| = 2.
\end{equation}
Thus, by Inequality (\ref{E:alt3}) we have $d_X(F(x,t), F(x',t)) \leq
2R/\epsilon+R$.  Define $\Delta = \max \{ 3\delta, 2R/\epsilon + R \}$
and define the {\em displaced height} of $T$ to be
\[
h_T = h(T) = \min \Big\{ t \in \RR \,\, \Big| \,\, 
d_X\left(F(x,t), F(\calQ)\right) \leq \Delta 
\,\, \mbox{or} \,\,
d_X\left(F(\calP), F(x',t)\right) \leq \Delta 
\Big\}.
\]
It follows that $h(T) \leq t(r)$.  Note that for any vertical triangle
$T$, Property~\ref{L:crit:slim} implies that $h(T) > -\infty$.

\begin{claim}
\label{C:SlimAbove}
For any vertical triangle $T = \calP \cup \calQ \cup \calR \subset \HH^n$,
\[
d_X\left(F(x, h_T), F(x', h_T)\right) \leq 3 \Delta,
\]
where $x = x(\calP)$, $x' = x(\calQ)$.
\end{claim}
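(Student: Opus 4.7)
The strategy is to reduce the claim, via one application of the triangle inequality, to the sharper bound $s - h_T \leq 2\Delta$, where $F(x',s) \in F(\calQ)$ is a closest point to $F(x,h_T)$. By symmetry of the definition of $h_T$, I may assume without loss of generality that the minimum defining $h_T$ is realized by $f(h_T) := d_X(F(x,h_T), F(\calQ)) \leq \Delta$; by continuity of $f$ we have equality, and such an $F(x',s)$ exists. Given $s - h_T \leq 2\Delta$, the claim follows from
\[
d_X(F(x,h_T),F(x',h_T)) \leq d_X(F(x,h_T),F(x',s)) + d_X(F(x',s),F(x',h_T)) \leq \Delta + 2\Delta = 3\Delta,
\]
using that $F\circ \eta_{x'}$ is an arc-length geodesic.

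First I would verify $s \geq h_T$. Were $s < h_T$, then
\[
d_X(F(\calP), F(x',s)) \leq d_X(F(x,h_T), F(x',s)) \leq \Delta
\]
at $t = s$, contradicting the minimality in the definition of $h_T$.

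The main work is showing $s - h_T \leq 2\Delta$. Here I would invoke the $\delta$-slim ideal triangle hypothesis (Property~\ref{L:crit:slim}) applied to the triangle $F\circ\eta_x \cup F\circ\eta_{x'} \cup \sigma$, where $\sigma$ is the third side. Since $F(x',s)$ lies on $F(\calQ)$, slimness places it within $\delta$ of either $F(\calP)$ or $\sigma$. In either situation I would extract a point $F(x,v) \in F(\calP)$ within $O(\delta)$ of $F(x',s)$--directly in the first subcase, and via a $\sigma$-intermediary in the second (using that $F(x',s)$ is at distance exactly $\Delta$ from $F(x,h_T)$, which forces any nearby $\sigma$-point to lie in the ``center'' region where $\sigma$ is also close to $F(\calP)$). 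A triangle inequality among $F(x,h_T), F(x,v), F(x',s)$ then pins $v - h_T$ to $[\Delta - O(\delta),\, \Delta + O(\delta)]$, and hence places $v$ roughly at height $h_T + \Delta$. Since $h_T \leq t(r)$ and $\Delta$ was calibrated via inequality~(\ref{E:alt3}) precisely at $t = t(r)$, the parameter $v$ lies in the synchronization zone where $d_X(F(x,v), F(x',v)) \leq \Delta$. A second triangle inequality--combining this synchronization with $d_X(F(x,v), F(x',s)) \leq O(\delta)$--yields $|s - v| \leq \Delta + O(\delta)$, and together the two estimates give $s - h_T \leq 2\Delta$ after the small $\delta$-errors are absorbed using $\Delta \geq 3\delta$.

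The main obstacle is the subcase in which $F(x',s)$ is $\delta$-close only to $\sigma$: the slim triangle structure must be exploited carefully to argue that the corresponding point of $\sigma$ remains close enough to $F(\calP)$ to supply a usable $F(x,v)$. A secondary technical concern is verifying that $v$ indeed lands in the synchronization zone $v \geq t(r)$--otherwise inequality~(\ref{E:alt3}) is insufficient to control $d_X(F(x,v), F(x',v))$ and one must substitute a slim-triangle argument at the height $t(r)$ instead. Tracking the constants to land exactly at $3\Delta$, rather than some weaker multiple, is where the inequalities $\Delta \geq 3\delta$ and $\Delta \geq 2R/\epsilon + R$ are used together.
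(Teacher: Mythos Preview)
Your approach differs from the paper's in a substantive way: you invoke the slim-triangle hypothesis (Property~\ref{L:crit:slim}) to locate an auxiliary point $F(x,v)$ on $F(\calP)$ close to $F(x',s)$, whereas the paper's proof does not use slimness at all. Instead the paper goes directly to height $t' = \max\{s, t(r)\}$, where equation~(\ref{E:horocycledist}) together with inequality~(\ref{E:alt3}) already give $d_X(F(x,t'), F(x',t')) \leq 2R/\epsilon + R$. Two triangle inequalities run around the quadrilateral $F(x,h_T)$, $F(x,t')$, $F(x',t')$, $F(x',s)$ then yield $|h_T - s| \leq (2R/\epsilon + R) + \Delta \leq 2\Delta$, and the claim follows exactly as in your final step.

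Your route, by contrast, runs into the gap you yourself flag as a ``secondary technical concern'': you need $v \geq t(r)$ to bound $d_X(F(x,v), F(x',v))$ via inequality~(\ref{E:alt3}), but you have only arranged $v \approx h_T + \Delta$ with $h_T \leq t(r)$, and $t(r) - h_T$ is precisely the quantity the \emph{next} claim exists to control---so it cannot be assumed bounded here. Your suggested fallback, a ``slim-triangle argument at the height $t(r)$'', would on inspection amount to using the closeness of $F(x,t(r))$ and $F(x',t(r))$; but that is the paper's direct argument, and once you take it the slimness detour becomes unnecessary. The $\sigma$-subcase you identify as the ``main obstacle'' is likewise unresolved as written: nothing forces the $\sigma$-point near $F(x',s)$ to lie $\delta$-close to $F(\calP)$ rather than only to $F(\calQ)$, so extracting a usable $F(x,v)$ there is not justified.
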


\begin{proof}
Breaking symmetry, in this setting, allows us to assume that there is
some $s \in \RR$ so that $d_X(F(x',s), F(x,h_T)) \leq \Delta$.  Let
$t' = \max\{s, t(r)\}$.  Using the triangle inequality,
Inequality~\ref{E:alt3} and Property~\ref{L:crit:geod} we have
\begin{align*}
t' - h_T & = d_X\left(F(x,t'),(x,h_T)\right) \\
          & \leq d_X\left(F(x,t'), F(x',t')\right) + d_X\left(F(x',t'), F(x',s)\right) 
                              + d_X\left(F(x',s), F(x,h_T)\right) \\
          & \leq (2R/\epsilon + R) + (t' - s) + \Delta \\
\intertext{and similarly}
t' - s    & \leq 2R/\epsilon + R + t' - h_T + \Delta. 
\end{align*}
Thus $|h_T - s| \leq 2R/\epsilon + R + \Delta$.  Another application
of the triangle inequality and Property~\ref{L:crit:geod} implies that
$d_X\left(F(x,h_T), F(x',h_T)\right) \leq 2R/\epsilon + R + 2\Delta \leq
3\Delta$, as desired.
\end{proof}

As mentioned above, for every vertical triangle $T$ we have $h(T) >
-\infty$ and hence $t(r) - h(T) < \infty$.  We now obtain a uniform
bound on this quantity.

\begin{claim}
\label{C:UniformBoundOnDisplacement}
There is a constant $C_0 = C_0(F)$ so that $t(r) - h(T) \leq C_0$ for all
vertical triangles $T \subset \HH^n$.  
\end{claim}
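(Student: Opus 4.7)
The plan is to argue by contradiction, using Property~\ref{L:crit:prop} (the properness condition) to defeat the uniform upper bound provided by \refclm{SlimAbove}. The intuition is that as $t(r) - h(T)$ grows, the two points $(x, h_T)$ and $(x', h_T)$ of $\HH^n$ move deeper and deeper into the horoball based at the common ideal vertex of $\calP$ and $\calQ$, and their horocyclic separation $e^{-h_T}|x - x'|$ grows exponentially in $t(r) - h_T$. However \refclm{SlimAbove} keeps their $F$-images within $3\Delta$, which must eventually violate Property~\ref{L:crit:prop}.

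Concretely, the first step is to suppose no such $C_0$ exists and extract a sequence of vertical triangles $T_k = \calP_k \cup \calQ_k \cup \calR_k$ with $t(r_k) - h(T_k) \to \infty$ as $k \to \infty$. Setting $x_k = x(\calP_k)$, $x_k' = x(\calQ_k)$, and $t_k = h(T_k)$, the second step is to note that at the midpoint $r_k$ of the semicircular geodesic $\calR_k$ the identity $e^{-t(r_k)}|x_k - x_k'| = 2$ holds, as already recorded in Inequality~(\ref{E:horocycledist}). This yields
\[
e^{-t_k}|x_k - x_k'| \;=\; 2\, e^{\,t(r_k) - t_k},
\]
which tends to infinity by the choice of the sequence. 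Property~\ref{L:crit:prop} then forces $d_X\bigl(F(x_k, t_k), F(x_k', t_k)\bigr) \to \infty$, contradicting the uniform bound $d_X\bigl(F(x_k, t_k), F(x_k', t_k)\bigr) \leq 3\Delta$ supplied by \refclm{SlimAbove}. The required $C_0$ therefore exists.

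I anticipate no real obstacle: the substantive inequality was already established in \refclm{SlimAbove}, and the present claim is a short juxtaposition of that bound with properness. The only point that needs care is translating the hypothesis $t(r) - h_T$ large into unbounded horocyclic separation, which is immediate from the midpoint identity $e^{-t(r)}|x - x'| = 2$ built into the definition of $r$.
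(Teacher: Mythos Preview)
Your proposal is correct and follows essentially the same argument as the paper: both argue by contradiction, use the midpoint identity $e^{-t(r_k)}|x_k - x_k'| = 2$ to show that $e^{-h(T_k)}|x_k - x_k'| \to \infty$, and then pit Property~\ref{L:crit:prop} against \refclm{SlimAbove}. The only difference is notational (you write $t_k$ for what the paper calls $h_k$), and your proof is complete.
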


\begin{proof}
Suppose not.  Then we are given a sequence of vertical triangles $T_k
= \calP_k \cup \calQ_k \cup \calR_k$ where $t(r_k) - h(T_k)$ tends to
infinity with $k$.  Here $r_k$ is the midpoint of $\calR_k$, the
non-vertical side. Define $t_k = t(r_k)$, $h_k = h(T_k)$.  Define $x_k
= x(\calP_k)$, $x_k' = x(\calQ_k)$ to be the horizontal coordinates of the
vertical sides of $T_k$.

Note that by Equation (\ref{E:horocycledist})
\begin{flalign*}
  && e^{-t_k}|x_k - x_k'|  & = 2 & \\
\text{and so}
  && e^{-h_k} |x_k - x_k'| & = e^{-h_k} \cdot 2e^{t_k} = 2e^{t_k-h_k}. &
\end{flalign*}
Thus $e^{-h_k} |x_k - x_k'|$ tends to infinity with $k$.  From
Property~\ref{L:crit:prop} we deduce that the quantity
$d_X\left(F(x_k, h_k), F(x_k', h_k)\right)$ also tends to infinity
with $k$.  This last, however, contradicts \refclm{SlimAbove}.
\end{proof}

We give the proof of \reflem{criteria}.  Fix any $p, q \in \HH^n$.  If
$x(p) = x(q)$ then we are done by Property~\ref{L:crit:geod}.  Suppose
instead that $x(p) \neq x(q)$.  Let $\calP \cup \calQ \cup \calR$
denote the vertical triangle having vertical sides $\calP$ and $\calQ$
so that $x(\calP) = x(p)$, $x(\calQ) = x(q)$; let $r \in \calR$ be the
midpoint of the non-vertical side.  Define $C_1 = 2C_0 + 5\Delta + 1$.
There are now two cases to consider.

\begin{case*}
Suppose that $t(p) \geq h(T) - C_1$.  
\end{case*}

Let $p' \in \calP$ and $q' \in \calQ$ be the points with $t(p') =
t(q') = \max \{t(p), t(r)\}$.  Then by the triangle inequality and
Equation (\ref{E:horocycledist}) we have
\begin{align*}
d_\HH(p, q') & \leq d_\HH(p, p') + d_\HH(p', q') \\
             & \leq t(p') - t(p) + 2 \\
             & \leq t(r) - h(T) + C_1 + 2 \\
             & \leq C_0 + C_1 + 2.
\end{align*}
It follows that $d_\HH(p,q)$ is estimated by $d_\HH(q',q) = |t(q') -
t(q)|$ up to an additive error at most $C_0 + C_1 + 2$.  Appealing to
Property~\ref{L:crit:geod}, Inequality (\ref{E:alt3}), and the
triangle inequality we similarly have
\begin{align*}
d_X\left(F(p),F(q')\right)  
     & \leq d_X\left(F(p),F(p')\right) + d_X\left(F(p'),F(q')\right)\\
     & \leq t(p') - t(p) + 2R/\epsilon  + R \\
     & \leq C_0 + C_1 + 2R/\epsilon + R.
\end{align*}
Thus $d_X\left(F(p),F(q)\right)$ is estimated by
$d_X\left(F(q'),F(q)\right) = d_\HH(q',q)$ with an additive error at
most $C_0 + C_1 + 2R/\epsilon + R$.  This completes the proof in this
case.

\begin{case*}
Suppose that $t(p), t(q) \leq h(T) - C_1$.  
\end{case*}

In this case, since the triangle $T = \calP \cup \calQ \cup \calR$ is
slim in $\HH^n$, we find that that $d_\HH(p,q)$ is estimated by $t(r)
- t(p) + t(r) - t(q)$ up to an additive error of at most $2$.  We now
show that $d_X(F(p), F(q))$ is also estimated by the latter quantity,
with a uniformly bounded error.  Using Property \ref{L:crit:geod} and
Inequality (\ref{E:alt3}) deduce
\[
d_X\left(F(p),F(q)\right) 
   \leq t(r) - t(p) + 2R/\epsilon + R + t(r) - t(q).
\]

We now give a lower bound for $d_X\left(F(p), F(q)\right)$.  Recall
that $F(\calP)$ and $F(\calQ)$ are two sides of a $\delta$--slim
triangle in $X$.  Let $\calR_X$ be the third side of this triangle.
Since
\[
d_X\left(F(p), F(\calQ)\right),d_X\left(F(\calP), F(q)\right) 
   > \Delta \geq \delta
\]
it follows that there are points $p_X, q_X \in \calR_X$ so that
$d_X\left(F(p), p_X\right), d_X\left(q_X, F(q)\right) \leq \delta$.
Thus the distance $d_X(p_X, q_X)$ is an estimate for $d_X\left(F(p),
F(q)\right)$ with an additive error at most $2\delta$.

Define $a = (x, h_T), b = (x', h_T)$.  Again, as in the previous
paragraph, there are points $a_X, b_X \in \calR_X$ within distance
$\delta$ of $F(a), F(b)$.  Since $d_\HH(a,b) \leq 2(t(r) - h(T)) + 2$
we find 
\begin{align*}
d_X(a_X, b_X) & \leq 2\delta + 2(t(r) - h(T)) + 2R/\epsilon + R\\
              & \leq 2\delta + 2C_0 + 2R/\epsilon + R.
\end{align*}
Note that the geodesic segments $[p_X, a_X], [b_X, q_X] \subset
\calR_X$ have length at least $h(T) - t(p) - 2\delta$ and $h(T) - t(q)
- 2\delta$ respectively.  Each of these is greater than $C_1 -
2\delta$.

If $p_X \in [a_X, b_X]$ then $C_1 - 2\delta \leq 2\delta + 2C_0 +
2R/\epsilon + R$ and this is a contradiction.  Similarly, deduce $q_X
\not\in [a_X, b_X]$.  If $p_X = q_X$ then $d_X(F(p), F(q)) \leq
2\delta < \Delta$, contradicting our assumption that $t(p) < h(T)$.
Finally, if $p_X \in (b_X, q_X)$ then an intermediate value argument
using the fact that $\calR_X$ is a geodesic implies $d_X(F(p),
F(\calQ)) \leq 3\delta$, again a contradiction.  Similarly $q_X$ is
not in $(p_X, a_X)$.  Thus, $[p_X, a_X] \cap [b_X, q_X]$ is either
empty or is equal to $[a_X, b_X]$.  We deduce that
\begin{align*}
d_X(p_X, q_X) & \geq 2h(T) - t(p) - t(q) - 4\delta - 2\delta - 2C_0 - 2R/\epsilon - R\\
              & \geq 2t(r) - t(p) - t(q) - 7\Delta - 4C_0.
\end{align*}
The proof of \reflem{criteria} is complete. \qed

\section{Foliations and projections} 
\label{S:subsurfaces}

Let $Z$ be a closed surface of genus at least $2$ and ${\bf z}$ a set
of marked points.  A {\em measured singular foliation} $\calF$ on
$(Z,{\bf z})$ is a singular topological foliation so that
\begin{itemize}
\item
$\calF$ has only prong-type singularties, 
\item
all one-prong singularties of $\calF$ appear at points of ${\bf z}$,
and
\item 
$\calF$ is equipped with a transverse measure of full support.  
\end{itemize}
We refer the reader to \cite{FLP,masurinterval} for a detailed
discussion of measured foliations.  Two measured (respectively,
topological) foliations are {\em measure equivalent} (respectively,
{\em topologically equivalent}) if they differ by isotopy and
Whitehead moves.  We will only be concerned with those foliations
which appear as the vertical foliation for some meromorphic quadratic
differential on $Z$ (see Section \ref{S:definitions}).  Every measured
singular foliation is measure equivalent to such a foliation for a
fixed complex structure on $Z$; see \cite{hubbardmasur}.

The space of measure classes of measured foliation on $(Z,{\bf z})$ is
denoted by $\MF(Z,{\bf z})$ and its projectivization by $\PMF(Z,{\bf
z})$.  A measured foliation $\calF \in \MF(Z,{\bf z})$ is {\em
arational} if it has no closed leaf cycles.  We say that $\calF$ is
{\em uniquely ergodic} if whenever $\calF' \in \MF(Z,{\bf z})$ is
topologically equivalent to $\calF$, then $\calF$ and $\calF'$
project to the same point in $\PMF(Z,{\bf z})$.  Both of these notions
depend only on the topological classes of the foliations, and not the
transverse measures.

If $\calF$ is a measured foliation representing an element of
$\MF(Z)$, and ${\bf z} \subset Z$ is a set of marked points, then
$\calF$ also determines an element of $\MF(Z,{\bf z})$.  We note
that it is important in this case that $\calF$ be a foliation,
and not an equivalence class of foliations.  If $\calF$ is
arational as an element of $\MF(Z)$, and if ${\bf z} = \{z\}$ is a
single point, then $\calF$ is also arational as an element of
$\MF(Z,z)$; see \cite{leiningerschleimer}.

By a {\em strict subsurface} $Y \subset Z - {\bf z}$ we mean a
properly embedded surface with {\em nonempty} boundary and a set of
punctures, possibly empty, such that every component of $\partial Y$
is an {\em essential curve} in $Z - {\bf z}$; that is, homotopically
nontrivial and nonperipheral.  We also assume that $Y$ is not a sphere
with $k$ punctures and $j$ boundary components where $k + j = 3$.  We
will only refer to subsurfaces in one context, and that is as follows.
Given a pair of arational measured foliation $\calF, \calG \in
\MF(Z,{\bf z})$ and a proper subsurface $Y \subset Z - {\bf z}$, we
have the {\em projection distance}
\[ 
d_Y(\calF,\calG) \in \mathbb Z_{\geq 0}
\]
between $\calF$ and $\calG$ in $Y$.  This is the distance in the
arc-and-curve complex of $Y$ between the the subsurface projections of
$\calF$ and $\calG$ to $Y$.  For a detailed discussion, see
\cite{MM1,MM2}.  All we use is that $d_Y$ satisfies a triangle
inequality
\[ 
d_Y(\calF_1,\calF_2) 
  \leq d_Y(\calF_1,\calG) + d_Y(\calG,\calF_2)
\]
for all arational measured foliations $\calF_1,\calF_2,\calG \in
\MF(Z,{\bf z})$.  This relates to Teichm\"uller geometry by
\refthm{Rafi} below.

\section{Teichm\"uller spaces} 
\label{S:teichmuller}

Here we set notation and recall some basic properties of Teichm\"uller
space.  For background on Teichm\"uller space, we refer the reader to
any of \cite{ahlforsqc,gardiner,abikoff,IT}.

\subsection{Teichm\"uller space, quadratic differentials and geodesics} 
\label{S:definitions}

Given a closed Riemann surface $Z$ with a finite (possibly empty) set
of marked points ${\bf z} \subset Z$, let $\T(Z,{\bf z})$ denote the
{\em Teichm\"uller space} of equivalence classes of marked Riemann
surfaces
\[ 
\T(Z,{\bf z}) = \left\{ [f \from (Z,{\bf z}) \to (X,{\bf x})] \, \left| \,
\begin{array}{l} f \mbox{ is an orientation preserving  homeo-}\\
  \mbox{morphism to the Riemann surface } X \end{array}
\right. \right\}. 
\]
The equivalence relation is defined by
\[ 
\big( f \from (Z,{\bf z}) \to (X,{\bf x}) \big) 
   \sim \big( g \from (Z,{\bf z}) \to (Y,{\bf y}) \big)
\]
if $f \circ g^{-1} \from (Y,{\bf y}) \to (X,{\bf x})$ is isotopic (rel
marked points) to a conformal map.  If ${\bf z} = \emptyset$, then we
write $\T(Z) = \{[f \from Z \to X]\}$.

The {\em Teichm\"uller distance} on $\T(Z,{\bf z})$ is defined by
\[ 
d_\T \big( [f \from (Z,{\bf z}) \to (X,{\bf x})],
           [g \from (Z,{\bf z}) \to (Y,{\bf y})] \big)
  =  \inf \left\{ \left. \frac{1}{2} \log \left( K_h \right) 
                       \,\right|\, h \simeq f \circ g^{-1} \right\}
\]
where $K_h$ is the dilatation of $h$ and where $h \from (Y,{\bf y}) \to
(X,{\bf x})$ ranges over all quasi-conformal maps isotopic (rel marked
points) to $f \circ g^{-1}$. 

Given $\epsilon > 0$, the {\em $\epsilon$--thick part of Teichm\"uller
space} $\T_\epsilon(Z,{\bf z}) \subset \T(Z,{\bf z})$ is the set of
points $[f \from (Z,{\bf z}) \to (X,{\bf x})] \in \T(Z,{\bf z})$ where
the unique complete hyperbolic surface in the conformal class of
$X-{\bf x}$ has its shortest geodesic of length at least $\epsilon$.
When $\epsilon$ is understood from context we will simply refer to
$\T_\epsilon(Z,{\bf z})$ as the {\em the thick part of Teichm\"uller
space}.

Let $\T(Z,{\bf z}) \to \M(Z,{\bf z})$ denote the projection to moduli
space obtained by forgetting the marking
\[ [
f \from (Z,{\bf z}) \to (X,{\bf x})] \mapsto [(X,{\bf x})] 
\]
or, equivalently, by taking the quotient by the mapping class group.
Mumford's compactness criterion \cite{bersmumford} now implies: For
any $\epsilon > 0$, the thick part $\T_\epsilon(Z,{\bf z})$ projects
to a compact subset of $\M(Z,{\bf z})$.  Conversely, the preimage of
any compact subset of $\M(Z,{\bf z})$ is contained in
$\T_\epsilon(Z,{\bf z})$ for some $\epsilon > 0$.

Suppose $(X,{\bf x})$ is a closed Riemann surface with marked points
and $q \in \Q(X,{\bf x})$ is a unit norm, meromorphic quadratic
differential with all poles simple and contained in ${\bf x}$.  We
also use $q$ to denote the associated Euclidean cone metric on $X$.
We note that $\Q(X) \subset \Q(X,{\bf x})$, for any set of marked
point ${\bf x} \subset X$.  Given $q \in \Q(X)$ we view it as an
element of $\Q(X,{\bf x})$ whenever it is convenient.

Given $q \in \Q(X,{\bf x})$ and $t \in \RR$, let $g_t \from (X,{\bf
x}) \to (X_t,g_t({\bf x}))$ denote the $e^{2t}$--quasi-conformal
Teichm\"uller mapping defined by $(q,t)$.  Let $q_t \in
\Q(X_t,g_t({\bf x}))$ denote the terminal quadratic differential.  For
any point $p \in X$ which is not a zero or pole of $q$ we have a {\em
preferred coordinate} $z_0$ for $(X,q)$ near $p$ and preferred
coordinate $z_t$ for $(X_t,q_t)$ near $g_t(p)$.  In these coordinates
$q = dz_0^2$ and $q_t=dz_t^2$, and $g_t$ is given by $(u,v) \mapsto
(e^t u, e^{-t} v)$.  If we mark $(X,{\bf x})$ by $f \from (Z,{\bf z}) \to
(X,{\bf x})$, then setting $f_t = g_t \circ f$ we have
\[ 
\tau_q(t) = [f_t \from (Z,{\bf z}) \to (X_t,g_t({\bf x}))]
\]
being a Teichm\"uller geodesic through $[f \from (Z,{\bf z}) \to (X,{\bf
x})]$; note that every Teichm\"uller geodesic can be described in this
way.  The Teichm\"uller geodesic $\tau$ is {\em $\epsilon$--thick} if
the image of $\tau$ lies in $\T_\epsilon(Z,{\bf z})$.  We also simply
say a geodesic $\tau$ is {\em thick} if it is $\epsilon$--thick for
some $\epsilon > 0$.  A collection of geodesics $\{\tau_\alpha\}$ is
{\em uniformly thick} if there is an $\epsilon > 0$ so that each
$\tau_\alpha$ is $\epsilon$--thick.

Given $q \in \Q(X,{\bf x})$ we will let $\calF(q), \calG(q)$ denote
the vertical and horizontal foliations respectively; that is, the
preimage in preferred coordinates of the foliations of $\mathbb C$ by
vertical and horizontal lines.  For $q \in \Q(X,{\bf x})$ and $t \in
\RR$ consider the associated Teichm\"uller mapping $g_t \from (X,{\bf
x}) \to (X_t,g_t({\bf x}))$ as above.  If $c \from \RR \to X$ is
a nonsingular leaf of $\calF(q)$ parameterized by arc-length with
respect to the $q$--metric, then composing with $g_t$ we obtain a
nonsingular leaf of the vertical foliation for the terminal quadratic
differential $\calF(q_t)$,
\[ 
g_t \circ c \from \RR \to X_t.
\]
From the description of $g_t$ in local coordinates we see that this is
parameterized {\em proportional} to arc-length and, in fact, the
$q_t$--length is given by
\begin{equation} 
\label{E:exp-length}
\ell_{q_t}\left( g_t \circ c |_{[x,x']} \right) = e^{-t}|x' - x|.
\end{equation}

\subsection{Properties of Teichm\"uller geodesics} 
\label{S:geodesicprop}

Suppose $\tau = \tau_q$ is the Teichm\"uller geodesic determined by
$[f \from (Z,{\bf z}) \to (X,{\bf x})] \in \T(Z,{\bf z})$ and $q \in
\Q(X,{\bf x})$.  The forward asymptotic behavior of $\tau$ is
reflected in the structure of the vertical foliation $\calF(q)$.  For
us, the most important instance of this is a result of
Masur~\cite{masurhaus}.

\begin{theorem}[Masur] 
\label{T:Masur-ue}
If there exists $\epsilon > 0$ and $\{t_k\}_{k=1}^\infty$ such that
\begin{itemize}
\item
$t_k \to \infty$ as $k \to \infty$ and 
\item
$\tau_q(t_k) \in \T_\epsilon(Z,{\bf z})$ for all $k$ 
\end{itemize}
then $\calF(q)$ is arational and uniquely ergodic.
\end{theorem}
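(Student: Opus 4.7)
The plan is to prove the theorem by contrapositive: if $\calF(q)$ fails to be arational or uniquely ergodic, then $\tau_q(t)$ must eventually exit every thick part $\T_\epsilon(Z,{\bf z})$ as $t \to \infty$. I would handle the two conclusions separately, since arationality is easy and unique ergodicity requires the real work.

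For arationality, suppose that $\calF(q)$ has a closed leaf cycle. Then there is an essential simple closed curve $\alpha \subset Z$ whose $q$-geodesic representative is entirely vertical: it consists either of a core of a cylinder of closed vertical leaves, or of a cycle of vertical saddle connections. In either case, $\alpha$ has $q$-length $V > 0$ contributed purely by the vertical measure. Using the description of $g_t$ in preferred coordinates, the vertical measure scales by $e^{-t}$, so the $q_t$-length of the image curve $f_t(\alpha)$ is $e^{-t} V$ as in Equation~\eqref{E:exp-length}. Since $q_t$-area is preserved along the Teichm\"uller flow, the extremal length of $\alpha$ on $X_t$ is bounded by a constant times $e^{-2t}V^2$, and by the standard comparison between extremal and hyperbolic length this forces the hyperbolic length of $\alpha$ on $X_t$ to zero, contradicting $\tau_q(t_k) \in \T_\epsilon(Z,{\bf z})$.

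For unique ergodicity I would argue by a compactness plus limits argument in the spirit of Masur. Mumford's compactness criterion says that the thick part projects to a compact subset of moduli space, so after passing to a subsequence there exist mapping classes $\phi_k \in \Mod(Z,{\bf z})$ such that $\phi_k \cdot \tau_q(t_k) \to Y_\infty \in \T_\epsilon(Z,{\bf z})$; simultaneously the unit-norm quadratic differentials $(\phi_k)_*(q_{t_k})$ converge to some $q_\infty \in \Q(Y_\infty)$, and in particular $\calF((\phi_k)_*(q_{t_k})) \to \calF(q_\infty)$ in $\MF(Z,{\bf z})$. Rewriting the left side as $e^{-t_k}(\phi_k)_*\calF(q)$, this says that the projective classes $[\phi_k \cdot \calF(q)]$ converge to $[\calF(q_\infty)]$ in $\PMF(Z,{\bf z})$. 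Now suppose $\calF(q)$ had two projectively distinct ergodic components $\mu_1, \mu_2$; these are topologically equivalent to $\calF(q)$, so applying Hubbard-Masur on $X$ I can realize each $\mu_i$ as the vertical foliation of a quadratic differential $q^i$ that is cohomologous to $q$ in the horizontal direction. Running the same compactness argument for each $\mu_i$, both $[\phi_k \cdot \mu_i]$ must converge to projective classes realized by limits of vertical foliations of the sequence $(\phi_k)_*(q_{t_k})$; linearity of the $\phi_k$-action and the fact that both have the same horizontal foliation as $q$ then force $[\mu_1] = [\mu_2]$, a contradiction.

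The arationality step is essentially a direct calculation, and I expect no real obstacle there. The technical heart is the unique ergodicity argument: the delicate point is passing from the convergence of normalized quadratic differentials in the unit stratum to a genuine projective statement about the abstract ergodic components, keeping careful track of how the $\Mod(Z,{\bf z})$-action interacts with the $e^{-t_k}$ scaling coming from Equation~\eqref{E:exp-length}. In practice, I think the cleanest route is via a tightness argument for the transverse measures on a fixed arc system, as in Masur's original paper, which avoids having to invoke Hubbard-Masur for the perturbed measures.
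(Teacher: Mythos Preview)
The paper does not prove this theorem: it is stated as a result of Masur with a citation to \cite{masurhaus}, and no proof is given. So there is nothing to compare your argument to on the paper's side.

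As to your sketch itself: the arationality half is fine and is the standard short argument. The unique ergodicity half has the right overall shape---recurrence to a compact part of moduli space, pass to a subsequence, extract a limiting quadratic differential---but the step where you conclude $[\mu_1] = [\mu_2]$ is not a proof as written. Saying that ``linearity of the $\phi_k$-action and the fact that both have the same horizontal foliation as $q$'' forces the projective classes to agree skips the actual content: you have not explained why the limits of $e^{-t_k}(\phi_k)_*\mu_1$ and $e^{-t_k}(\phi_k)_*\mu_2$ must coincide, nor why coincidence of those limits would pull back to projective equality of $\mu_1$ and $\mu_2$. Distinct ergodic measures can a priori be sent by a divergent sequence of mapping classes to the same projective limit. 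You yourself flag this, and your closing remark is correct: the honest route is the one in Masur's paper, controlling the transverse measures on a fixed transversal (or arc system) and using recurrence to get the required tightness/rigidity. If you want to present a proof rather than a citation, that is the argument to write out.
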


In particular, if $\tau_q$ is thick then both $\calF(q)$ and
$\calG(q)$ are uniquely ergodic.  We say a pair of arational
foliations $\calF$ and $\calG$ are {\em $K$--cobounded} if for all
strict subsurfaces $Y \subset X - {\bf x}$ we have $d_Y(\calF, \calG)
\leq K$.  A result of Rafi~\cite[Theorem~1.5]{rafi} relates the
thickness of a geodsic $\tau_q \subset \T$ to the coboundedness of the
associated vertical and horizontal foliations.

\begin{theorem}[Rafi] 
\label{T:Rafi}
For all $\epsilon > 0$ there exists $K > 0$ so that if $q \in
\Q(X,{\bf x})$ has $\tau_q$ being $\epsilon$--thick then $\calF(q)$
and $\calG(q)$ are $K$--cobounded.

Conversely, for all $K > 0$ there exists $\epsilon > 0$ so that if
$q \in \Q(X,{\bf x})$ has $\calF(q)$ and $\calG(q)$ being
$K$--cobounded then $\tau_q$ is $\epsilon$--thick.
\end{theorem}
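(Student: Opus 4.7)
My plan is to reduce both directions of \refthm{Rafi} to a single quantitative dictionary between short curves along $\tau_q$ and large subsurface projections of $\calF(q)$ and $\calG(q)$. Writing $\ell_t(\alpha)$ for the hyperbolic length of a curve $\alpha$ at $\tau_q(t)$, the dictionary I would establish says: there exist unbounded functions $g,h \colon (0,\infty) \to \RR$, with $g(s) \to \infty$ as $s \to 0$ and $h(K) \to 0$ as $K \to \infty$, such that (i) if $\ell_t(\alpha) \leq s$ for some $t$, then some strict subsurface $Y$ with $\alpha$ either a boundary component or the core of $Y$ satisfies $d_Y(\calF(q),\calG(q)) \geq g(s)$; and conversely (ii) if $d_Y(\calF(q),\calG(q)) \geq K$ for some strict $Y$, then some component of $\partial Y$ (or the core curve, when $Y$ is annular) has $\ell_t(\alpha) \leq h(K)$ for some $t \in \RR$.

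Granting this dictionary, both halves of \refthm{Rafi} fall out immediately. For the forward direction, given $\epsilon$ choose $K$ with $h(K) < \epsilon$: if $\tau_q$ is $\epsilon$-thick then no curve achieves length $< \epsilon$ anywhere along $\tau_q$, and the contrapositive of (ii) forces $d_Y(\calF(q),\calG(q)) < K$ for every strict $Y$, giving $K$-coboundedness. For the reverse direction, given $K$ choose $\epsilon$ with $g(\epsilon) > K$: if $\tau_q$ failed to be $\epsilon$-thick, then (i) would produce a strict $Y$ with $d_Y(\calF(q),\calG(q)) > K$, contradicting $K$-coboundedness.

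The main obstacle is the dictionary itself, which is the substantive content of the theorem. To prove (i) I would use the flat-geometric structure theory of short curves on $(X_t,q_t)$: a curve $\alpha$ with $\ell_t(\alpha)$ small is homotopic either to a long thin flat cylinder or to the core of an expanding annulus in the $q_t$-metric. In the cylinder case one identifies a \emph{balance time} $t_{\mathrm{bal}}$ where the horizontal and vertical $q$-measures of $\alpha$ coincide (and where $\ell_t(\alpha)$ is minimized); the annular projection $d_\alpha(\calF(q),\calG(q))$ then measures the total relative twisting of the two foliations across the active interval about $t_{\mathrm{bal}}$, and is of order $\log\bigl(1/\ell_{t_{\mathrm{bal}}}(\alpha)\bigr)$. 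In the expanding-annulus case $\alpha$ lies on the boundary of a non-annular subsurface $Y$ on which $\calF(q)$ and $\calG(q)$ project far apart, via the many arcs of each foliation that cross $\partial Y$.

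For the converse (ii), I would run the argument in reverse using Minsky's product region theorem and its refinements: a large subsurface projection $d_Y(\calF(q),\calG(q))$ forces the Teichm\"uller geodesic to pass through a region of $\T(Z,{\bf z})$ where $\partial Y$ (or the core of $Y$) becomes short, with shortness quantified by $K$. Assembling the annular and non-annular cases into a single pair of growth functions $g,h$, and tracking the auxiliary constants through the flat-geometric estimates and through Minsky's quasi-isometric product structure of thin parts, is the real work.
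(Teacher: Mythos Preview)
The paper does not prove \refthm{Rafi}; it is quoted as \cite[Theorem~1.5]{rafi} and used as a black box. So there is no ``paper's own proof'' to compare against.

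That said, your outline is a faithful sketch of how Rafi's theorem is actually established. The dictionary you describe---short curves along $\tau_q$ correspond to large subsurface projections between $\calF(q)$ and $\calG(q)$, with the annular case governed by twisting across a balance time and the non-annular case by the geometry of expanding annuli bounding a thick subsurface---is precisely the content of Rafi's work, and Minsky's product region theorem is the right tool for the converse direction. You are honest that the substance lies in making the functions $g$ and $h$ explicit via the flat-geometric estimates, and that is indeed where the work is; your reduction of both directions of the theorem to the dictionary is correct and standard. There is no gap in the logic of the reduction, only the acknowledged (and substantial) work of proving (i) and (ii), which is what Rafi's paper does.
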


\subsection{Forgetting the marked point: the Bers fibration}
\label{S:forget}

Suppose now that $Z$ is a closed surface and $z \in Z$ is a single
marked point; we use $(Z, z)$ to denote $(Z, \{z\})$.  Let $p \from
\widetilde Z \to Z$ denote the universal covering.  Given $[f \from (Z,z)
\to (X,f(z))]$ we can forget the marked point to obtain an element
$[f \from Z \to X] \in \T(Z)$.  This defines a holomorphic map
\[ 
\Pi \from \T(Z,z) \to \T(Z)
\]
called the {\em Bers fibration}~\cite{bersfiber}.  The fiber of this
map over $[f \from Z \to X]$ is holomorphically identified with $\widetilde
X$, the universal covering of $X$.  Moreover, this identification is
canonical, up to the action of the covering group on $\widetilde X$.

The projection of Teichm\"uller spaces $\Pi \from \T(Z,z) \to \T(Z)$
descends to a projection of moduli spaces $\hat \Pi \from \M(Z,z) \to
\M(Z)$.  The fiber of $\hat \Pi$ over $X \in \M(Z)$ is just
$X/\Aut(X)$ and this is compact.

Recall that puncturing a closed surface once increases the hyperbolic
systole.  (Lift to universal covers and apply the Schwarz-Pick lemma.)
It follows that the preimage of $\T_{\epsilon}(Z)$ by $\Pi^{-1}$ is
contained in $\T_{\epsilon}(Z,z)$.

By a theorem of Royden~\cite{royden} the Teichm\"uller metric agrees
with the Kobayashi metric on Teichm\"uller space.  Recall that the
inclusion of the universal covering $\widetilde X \to \T(Z,z)$ is a
holomorphic embedding \cite{bersfiber}.  Thus, if we give $\widetilde
X$ the Poincar\'e metric $\rho_0$ --- one-half the hyperbolic metric
--- then $(\widetilde X,\rho_0) \to (\T(Z,z), d_\T)$ is a contraction
\cite{kobayashihyperbolic}.  Kra \cite{kra} further proved the
following.

\begin{theorem}[Kra] 
\label{T:kra} 
There exists a homeomorphism $h \from [0,\infty) \to [0,\infty)$ so that for
any $[f \from Z \to X] \in \T(Z)$, and any $\tilde x_1,\tilde x_2 \in
\widetilde X \subset \T(Z,z)$, we have
\[ 
h(\rho_0(\tilde x_1,\tilde x_1)) \leq  d_\T(\tilde x_1,\tilde x_2)
  \leq \rho_0(\tilde x_1,\tilde x_2). 
\]
\end{theorem}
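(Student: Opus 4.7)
The plan is to prove the two inequalities separately. The upper bound $d_\T(\tilde x_1,\tilde x_2) \le \rho_0(\tilde x_1,\tilde x_2)$ is essentially already assembled in the excerpt: the inclusion $\widetilde X \hookrightarrow \T(Z,z)$ is a holomorphic embedding, $\rho_0$ is the Kobayashi metric on the disk $\widetilde X$, and by Royden's theorem $d_\T$ is the Kobayashi metric on $\T(Z,z)$. Since holomorphic maps are Kobayashi non-expanding, the inequality is immediate.

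For the lower bound I would construct a continuous, strictly increasing $H\from[0,\infty)\to[0,\infty)$ with $H(0)=0$ such that $\rho_0(\tilde x_1,\tilde x_2) \le H\bigl(d_\T(\tilde x_1,\tilde x_2)\bigr)$, and then let $h$ be a continuous monotone extension of $H^{-1}$ to a self-homeomorphism of $[0,\infty)$. Unpacking the setup on the fiber $\widetilde X$ over $[f\from Z \to X] \in \T(Z)$: two points $\tilde x_1,\tilde x_2$ correspond to marked surfaces $[f_i\from (Z,z)\to (X,x_i)]$ with a common underlying $X$ and distinct marked points $x_i = p(\tilde x_i)$. Since $f_1$ and $f_2$ represent the same point of $\T(Z)$ after forgetting the marked point, $f_1\circ f_2^{-1}$ is isotopic to the identity of $X$ when the marked points are ignored; the actual isotopy class relative to marked points is encoded by any path in $\widetilde X$ from $\tilde x_2$ to $\tilde x_1$. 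Thus $d_\T(\tilde x_1,\tilde x_2) = \tfrac12\log\inf K_\phi$, where $\phi\from(X,x_2)\to(X,x_1)$ ranges over quasi-conformal maps in that marked isotopy class.

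The key step is to convert such a $K$-quasi-conformal $\phi$ into a hyperbolic displacement bound on $\widetilde X$. I would take the canonical lift $\widetilde\phi\from\widetilde X\to\widetilde X$ pinned down by $\widetilde\phi(\tilde x_2) = \tilde x_1$, so that the lifted isotopy from $\phi$ to the identity (ignoring marked points) is equivariant for the deck group of $\widetilde X \to X$. The equivariant lifted isotopy fixes the limit set of the deck group, which is all of $S^1_\infty$, and so $\widetilde\phi$ extends continuously to the identity on the circle at infinity. Now $\widetilde\phi$ is a $K$-quasi-conformal self-map of the disk that fixes $S^1_\infty$ pointwise, and a classical quasi-conformal distortion estimate — via Mori's theorem, or equivalently by comparing the conformal moduli of an annulus bounded by a small hyperbolic circle around $\tilde x_2$ and by $S^1_\infty$, under $\widetilde\phi$ and its inverse — yields a bound $\rho_0(\tilde x_2,\widetilde\phi(\tilde x_2)) \le H_0(K)$ for some continuous $H_0$ with $H_0(1)=0$, depending only on $K$. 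Applied to the Teichm\"uller-extremal $\phi$ and combined with $K = e^{2 d_\T(\tilde x_1,\tilde x_2)}$, this gives the desired estimate with $H(t) = H_0(e^{2t})$.

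The main obstacle is the boundary-identity claim for $\widetilde\phi$. One has to verify that the marked isotopy class forced by the path $\tilde x_2 \rightsquigarrow \tilde x_1$ selects exactly the lift whose lifted isotopy is deck-equivariant, and deduce from this equivariance that $\widetilde\phi$ restricts to the identity on the full limit set. Once this is carefully pinned down, the remainder is standard quasi-conformal analysis on the disk, and because the distortion estimate $H_0(K)$ depends only on $K$ and not on the base point $[f\from Z \to X]$, the resulting homeomorphism $h$ is uniform in the fiber and the argument is complete.
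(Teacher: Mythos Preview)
The paper does not give its own proof of this theorem: it is stated with attribution to Kra, and the sentence immediately following the statement says only that the function $h$ ``can be described concretely in terms of the solution to a certain extremal mapping problem for the hyperbolic plane which was solved by Teichm\"uller and Gehring.'' So there is no detailed argument in the paper to compare against.

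That said, your outline is essentially Kra's argument, and it is correct. The upper bound is exactly the Royden/Kobayashi contraction already recorded in the paper just before the theorem. For the lower bound, the reduction to a $K$--quasiconformal self-map of the disk fixing $S^1_\infty$ pointwise, followed by a uniform displacement bound depending only on $K$, is precisely the ``extremal mapping problem'' the paper alludes to; the sharp function $H_0(K)$ is Teichm\"uller's \emph{Verschiebungssatz} (with Gehring's contribution). One small correction: Mori's theorem concerns $K$--quasiconformal self-maps of the disk fixing the \emph{origin}, not the boundary, so it is not quite the right citation here --- the relevant estimate is the Teichm\"uller--Gehring displacement bound (equivalently, a Gr\"otzsch-ring modulus comparison), which does give a continuous strictly increasing $H_0$ with $H_0(1)=0$ and $H_0(K)\to\infty$, so that $H(t)=H_0(e^{2t})$ is a genuine homeomorphism of $[0,\infty)$ and $h=H^{-1}$ needs no further extension. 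Your identification of the boundary-identity claim as the point requiring care is apt, and your justification via the equivariant lift of the isotopy is the standard and correct one.
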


The function $h$ can be described concretely in terms of the solution
to a certain extremal mapping problem for the hyperbolic plane which
was solved by Teichm\"uller \cite{teichmuller} and Gehring
\cite{gehring}.  We will extend $h$ to a nondecreasing function, $h
\from \RR \to [0,\infty)$ by declaring $h(t) = 0$ for all $t
\leq 0$.

\subsection{Branched covers} 
\label{S:branched}

Here we use branched covers to induce maps on Teichm\"uller space. 

Suppose $P \from \Sigma \to Z$ is a branched cover, branched over some
finite set of points ${\bf z} \subset Z$.  Then any complex structure
on $Z$ pulls back to a complex structure on $\Sigma$, and thus induces
a map $P^* \from \T(Z,{\bf z}) \to \T(\Sigma)$.  Regarding
Teichm\"uller space as the space of marked Riemann surfaces,
$\T(Z,{\bf z}) = \{[f \from (Z,{\bf z}) \to (X,{\bf x})]\}$, the
embedding is described as follows.  The branched covering $P \from
\Sigma \to (Z,{\bf z})$ induces a branched covering $U \from \Omega
\to (X,{\bf x})$, for some Riemann surface $\Omega$, namely the
branched cover induced by the subgroup $(f \circ P)_*(\pi_1(\Sigma -
P^{-1}({\bf z}))) < \pi_1(X - {\bf x})$.  By construction, there is a
lift of the marking homeomorphism $\phi \from \Sigma \to \Omega$.
This is described by the following commutative diagram.
\[ 
\xymatrix{
\Sigma \ar[d]_P \ar[r]^\phi & \Omega \ar[d]^U\\
(Z,z) \ar[r]^f & (X,x).} 
\]
Then, we have
\[
P^*([f \from (Z,{\bf z}) \to (X,{\bf x})]) = [\phi \from \Sigma \to \Omega].
\]

We now give a well-known consequence of these definitions. 

\begin{proposition} 
\label{P:isometric-teich}
If $P \from \Sigma \to Z$ is nontrivially branched at every point of
$P^{-1}({\bf z})$, then $P^* \from \T(Z,{\bf z}) \to \T(\Sigma)$ is an
isometric embedding.  Moreover, for all $\epsilon > 0$ there exists
$\epsilon' > 0$ so that $P^*(\T_\epsilon(Z,{\bf z})) \subset
\T_{\epsilon'}(\Sigma)$.
\end{proposition}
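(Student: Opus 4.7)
The plan is to exploit the observation that, given $[f : (Z,{\bf z}) \to (X,{\bf x})] \in \T(Z,{\bf z})$, the branched cover $\Omega$ depends only on the pointed Riemann surface $(X,{\bf x})$ and on $P$, not on the marking $f$. The marking $\phi : \Sigma \to \Omega$ is obtained by lifting $f$ through the branched covers, and is equivariant with respect to the deck group $D$ of $P$ acting on $\Sigma$ and the deck group $D_X$ of $U : \Omega \to X$; the covers $P$ and $U$ provide a canonical identification of these groups.

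For the isometric embedding I would verify the two inequalities separately. For the upper bound $d_\T(P^*[f_1], P^*[f_2]) \leq d_\T([f_1], [f_2])$, any quasiconformal $h : (X_1,{\bf x}_1) \to (X_2,{\bf x}_2)$ isotopic rel marked points to $f_2 \circ f_1^{-1}$ lifts to a quasiconformal $\tilde h : \Omega_1 \to \Omega_2$ with the same dilatation, since each $U_i$ is locally conformal off its finite branch locus and branch points are removable singularities for quasiconformal maps. For the reverse inequality I would take the Teichm\"uller extremal map $\tilde h$ in the homotopy class of $\phi_2 \circ \phi_1^{-1}$ and show it descends. For each deck transformation $\alpha \in D_{X_1}$, with corresponding $\alpha' \in D_{X_2}$, the equivariance of the $\phi_i$ implies that $\tilde h \circ \alpha$ and $\alpha' \circ \tilde h$ are homotopic Teichm\"uller maps of the same dilatation, hence equal by uniqueness of the Teichm\"uller extremal map. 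Thus $\tilde h$ is equivariant and descends to a quasiconformal $h : X_1 \to X_2$ with the same dilatation, isotopic rel marked points to $f_2 \circ f_1^{-1}$. The nontrivial branching hypothesis at each point of $P^{-1}({\bf z})$ enters exactly here, ensuring that $\Sigma$ carries no marked points and that the correct extremal problem is in $\T(\Sigma)$. I expect this descent-via-uniqueness step to be the main (if fairly standard) obstacle.

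For the statement about thick parts, the plan is to apply Mumford's compactness criterion twice. Because $\Omega$ depends only on the isomorphism class of $(X,{\bf x})$---two markings of the same $(X,{\bf x})$ produce the same $\Omega$ with markings differing by a self-homeomorphism of $\Sigma$---the map $P^*$ descends to a continuous map of moduli spaces $\hat P^* : \M(Z,{\bf z}) \to \M(\Sigma)$. Given $\epsilon > 0$, Mumford's criterion says the image of $\T_\epsilon(Z,{\bf z})$ in $\M(Z,{\bf z})$ is compact; its image under $\hat P^*$ is a compact subset of $\M(\Sigma)$, so by the converse direction of Mumford's criterion lies in the projection of $\T_{\epsilon'}(\Sigma)$ for some $\epsilon' > 0$. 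Since $\T_{\epsilon'}(\Sigma)$ is $\Mod(\Sigma)$-invariant, this gives $P^*(\T_\epsilon(Z,{\bf z})) \subset \T_{\epsilon'}(\Sigma)$, as required.
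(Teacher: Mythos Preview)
Your argument for the isometric embedding has the right shape but, as phrased, assumes $P$ is a regular (Galois) cover: for an arbitrary branched cover the deck group may be trivial, and equivariance with respect to it does not force descent. The cleaner route---and the one the paper takes---runs in the opposite direction: lift the Teichm\"uller extremal map $(X_1,{\bf x}_1)\to(X_2,{\bf x}_2)$ to $\Omega_1\to\Omega_2$ and observe that the lift is again a Teichm\"uller map, since the nontrivial-branching hypothesis guarantees that a simple pole (one-prong singularity) at a point of ${\bf x}$ lifts to a regular point or a zero (three or more prongs), so the lifted quadratic differential is holomorphic on $\Omega$. Teichm\"uller uniqueness then says this lift is already extremal in its homotopy class in $\T(\Sigma)$, giving equality of dilatations with no descent step needed.

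The thick-part argument has a genuine error: $P^*$ does \emph{not} in general descend to a map $\M(Z,{\bf z})\to\M(\Sigma)$. The Riemann surface $\Omega$ depends on the marking $f$, because it is built from the subgroup $(f\circ P)_*\bigl(\pi_1(\Sigma - P^{-1}({\bf z}))\bigr)$ of $\pi_1(X-{\bf x})$; replacing $f$ by $f\circ g$ for $g\in\Mod(Z,{\bf z})$ replaces this subgroup by $f_*g_*P_*\bigl(\pi_1(\Sigma - P^{-1}({\bf z}))\bigr)$, which need not be conjugate to the original. For a concrete failure, take $P$ an unbranched double cover of a genus-two $Z$: the fifteen index-two subgroups of $\pi_1(Z)$ are permuted transitively by $\Mod(Z)$, and for generic $X$ the corresponding genus-three double covers are pairwise non-isomorphic Riemann surfaces. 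The paper repairs this by descending only to the intermediate quotient $\widetilde\M(Z,{\bf z})$ of $\T(Z,{\bf z})$ by the finite-index subgroup of mapping classes that \emph{lift} to $\Sigma$; since $\widetilde\M(Z,{\bf z})\to\M(Z,{\bf z})$ is a finite cover, Mumford's criterion still gives compactness of the image of $\T_\epsilon(Z,{\bf z})$ there, and $P^*$ genuinely descends to $\widetilde\M(Z,{\bf z})\to\M(\Sigma)$. With that modification your Mumford argument goes through unchanged.
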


\begin{proof} 
When $P$ is a covering then $P^*$ is an isometric embedding; see
\cite[Section 7]{rafischleimer}.  The proof is identical in the
presence of nontrivial branching, as a one-prong singularity at a
point of ${\bf z}$ lifts to a regular point or to a three-prong or
higher singularity.

Let $\widetilde \M(Z, {\bf z})$ be the quotient of $\T(Z,{\bf z})$ by
the group of mapping classes of $(Z,{\bf z})$ that lift to $\Sigma$.
Note that $\widetilde \M (Z,{\bf z}) \to \M(Z,{\bf z})$ is a finite
sheeted (orbifold) covering.  The embedding $P^* \from \T(Z,{\bf z})
\to \T(\Sigma)$ descends to a map $\widetilde \M (Z,{\bf z}) \to
\M(\Sigma)$, giving a commutative square.
\[ 
\xymatrix{ \T(Z,{\bf z}) \ar[r]^{P^*} \ar[d] & \T(\Sigma) \ar[d]\\
\widetilde \M(Z,{\bf z}) \ar[r] & \M(\Sigma)} 
\] 
By Mumford's compactness criteria \cite{bersmumford}, the image of
$\T_\epsilon(Z,{\bf z})$ in $\widetilde \M(Z,{\bf z})$ is compact, and
hence so is the image in $\M(\Sigma)$.  Appealing to Mumford's
criteria again (for $\M(\Sigma)$), it follows that for some $\epsilon'
> 0$ we have $P^*(\T_\epsilon(Z,{\bf z})) \subset
\T_{\epsilon'}(\Sigma)$.
\end{proof}


In general, for any branched cover $P \from \Sigma \to Z$, branched
over ${\bf z} \subset Z$, consider ${\bf \sigma} = P^{-1}({\bf z})$ as
a set of marked points on $\Sigma$.  Then again there is an isometric
embedding
\[ 
P^* \from \T(Z, {\bf z}) \to \T(\Sigma, {\bf \sigma}). 
\]

If ${\bf \omega} \subset {\bf \sigma}$ then define $\Pi_\omega \from
\T(\Sigma, {\bf \sigma}) \to \T(\Sigma,{\bf \omega})$ by forgetting
the points of ${\bf \sigma}$ not in ${\bf \omega}$.  When ${\bf
\omega}$ is empty we may omit the subscript.  In this notation, the
composition $\Pi \circ P^*$ gives the map of
\refprop{isometric-teich}.  So, if $P$ is non-trivially branched at
all points of ${\bf \sigma}$ then $\Pi \circ P^*$ is an isometric
embedding.  If $P$ is not branched at all points of ${\bf \sigma}$
then $\Pi \circ P^*$ fails to be an isometric embedding; however it
remains $1$--Lipschitz.

\begin{proposition} 
\label{P:1-lip-teich}
If $P \from \Sigma \to Z$ is branched over ${\bf z}$ and if 
${\bf \omega} \subset {\bf \sigma} = P^{-1}({\bf z})$ is any subset
then 
\[
\Pi_\omega \circ P^* \from \T(Z,{\bf z}) \to \T(\Sigma,{\bf \omega})
\]
is $1$--Lipschitz.
\end{proposition}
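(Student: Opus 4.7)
The plan is to factor $\Pi_\omega \circ P^*$ as an isometric embedding followed by a $1$--Lipschitz forgetful map, and then invoke the elementary fact that such a composition is $1$--Lipschitz. By the discussion preceding the proposition, $P^* \from \T(Z, {\bf z}) \to \T(\Sigma, {\bf \sigma})$ is an isometric embedding for \emph{every} branched cover (once we regard all of ${\bf \sigma} = P^{-1}({\bf z})$ as marked points on $\Sigma$). So it suffices to prove that the point-forgetting map $\Pi_\omega \from \T(\Sigma, {\bf \sigma}) \to \T(\Sigma, {\bf \omega})$ is $1$--Lipschitz for any subset ${\bf \omega} \subset {\bf \sigma}$.

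For the $1$--Lipschitz statement I would go directly to the definition of the Teichm\"uller metric. Fix two points $[f_i \from (\Sigma, {\bf \sigma}) \to (Y_i, {\bf y}_i)]$ for $i = 1, 2$ in $\T(\Sigma, {\bf \sigma})$. The distance between them is the infimum of $\tfrac{1}{2} \log K_h$ taken over all quasi-conformal maps $h \from Y_2 \to Y_1$ isotopic to $f_1 \circ f_2^{-1}$ rel the full marked set ${\bf \sigma}$. Their images under $\Pi_\omega$ are represented by the same underlying markings but retaining only the subset ${\bf \omega}$ of marked points; the distance between these images is the infimum of $\tfrac{1}{2} \log K_h$ taken over quasi-conformal maps isotopic to $f_1 \circ f_2^{-1}$ rel only ${\bf \omega}$. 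Since any isotopy rel ${\bf \sigma}$ is, a fortiori, an isotopy rel ${\bf \omega}$, every competitor in the first infimum is also a competitor in the second, so the second infimum is bounded above by the first. This gives
\[
d_\T\left(\Pi_\omega(x_1), \Pi_\omega(x_2)\right) \leq d_\T(x_1, x_2),
\]
which is the $1$--Lipschitz property.

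Composing the isometry $P^*$ with the $1$--Lipschitz map $\Pi_\omega$ then yields a $1$--Lipschitz map $\Pi_\omega \circ P^* \from \T(Z, {\bf z}) \to \T(\Sigma, {\bf \omega})$, which is what we wanted. There is no real obstacle here; the only conceptually worth-noting point is the elementary observation that relaxing the isotopy-class constraint by forgetting marked points \emph{enlarges} the class of maps over which the dilatation is minimized, and therefore can only decrease the Teichm\"uller distance.
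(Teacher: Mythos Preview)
Your proof is correct, but it takes a different route from the paper's. Both proofs factor the map as the isometric embedding $P^*$ followed by the forgetful map $\Pi_\omega$, so the issue is why $\Pi_\omega$ is $1$--Lipschitz. The paper argues that $\Pi_\omega$ is holomorphic (as a composition of Bers fibrations, forgetting one point at a time) and then invokes Royden's theorem that the Teichm\"uller metric equals the Kobayashi metric, so holomorphic maps are automatically distance non-increasing. You instead go directly to the definition of the Teichm\"uller metric and observe that forgetting marked points enlarges the class of quasi-conformal competitors, hence can only lower the infimum. Your argument is more elementary and self-contained; the paper's argument is shorter once Royden's theorem is in hand and fits with the Kobayashi-metric viewpoint already used elsewhere (e.g.\ for \refthm{kra}).
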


\begin{proof}
The Bers fibration is a holomorphic map~\cite{bersfiber} and, by
forgetting the points of ${\bf \sigma} - {\bf \omega}$ one at a time,
we see that $\Pi_{\bf \omega} \from \T(\Sigma, {\bf \sigma}) \to
\T(\Sigma,{\bf \omega})$ is a composition of holomorphic maps, hence
holomorphic.  In particular, because the Teichm\"uller metric agrees
with the Kobayashi metric \cite{royden}, it follows that $\Pi_{\bf
\omega}$ is $1$--Lipschitz \cite{kobayashihyperbolic}.  Since $P^*$ is
an isometric embedding, the composition is $1$--Lipschitz.
\end{proof}

\section{An inductive construction} 
\label{S:proof}

The proof of Theorem~\ref{T:main} is constructive, but also appeals to
an inductive procedure.  We begin by constructing the required
embedding of $\HH^2$ into some Teichm\"uller space as the base
case of the induction, then produce an embedding of $\HH^3$ into
some other Teichm\"uller space, then an embedding of $\HH^4$,
and so on.  All the main ideas and technical difficulties are present
in the construction of the embedding of $\HH^2$ and then the
embedding of $\HH^3$ from that of $\HH^2$.  The only
further complications which arise to describe the embedding of
$\HH^n$ from $\HH^{n-1}$ for $n \geq 4$ are in the
notation, which becomes increasingly messy as $n$ increases.  This is
due to the fact that the proof for $n$ really depends on the proof for
all $2 \leq k < n$ (rather than just $n-1$).  For this reason, we
carefully describe the cases $n=2$ and $n=3$, and sketch the general
inductive step indicating only those things that require modification.

\subsection{The hyperbolic plane case} 
\label{S:H^2}

Let $Z$ be a closed hyperbolic surface.  Let $q \in \Q(Z)$ be a
nonzero holomorphic quadratic differential on $Z$ so that the
associated Teichm\"uller geodesic $[g_t \from Z \to Z_t]$ is thick.  Write
$\calF = \calF(q)$ and $\calG = \calG(q)$ for the
vertical and horizontal foliations of $q$, respectively.  Next, let
$c \from \RR \to Z$ be a nonsingular leaf of $\calF$
parameterized by arc-length with respect to $q$ and let $z = c(0)$ be
a marked point on $Z$; see \refsec{teichmuller}.

Our goal is to construct an almost-isometric embedding
\[ 
{\bf Z} \from \HH^2 \to \T(Z,z). 
\]
We consider an isotopy $Z \times \RR \to Z$, written $(w,x)
\mapsto f^x(w)$, where $f^x \from Z \to Z$ is a homeomorphism for all $x \in
\RR$, $f^0$ is the identity and $f^x(z) = c(x)$ for all $x \in
\RR$.  We further assume that $f^x$ preserves $\calF$ for
all $x \in \RR$. 

We can construct such an isotopy by piecing together isotopies defined
on small balls.  More precisely, we start with some $\epsilon$--ball
around $z$, and construct a vector field tangent to $\mathcal F$
supported in the ball with with norm identically equal to $1$ on the
$\epsilon/2$ ball.  The flow for time $t \in (-\epsilon/2,\epsilon/2)$
is an isotopy of the correct form.  Now we repeat this for a ball
around $c(\epsilon/2)$.  Since the arc of $c$ from $z$ to any point
$c(x)$ is compact, we can cover it with finitely many such balls to
produce the required isotopy.

We think of the isotopy as ``pushing $z$ along $c$''.  This determines
the {\em horocyclic coordinate}
\[ 
\widetilde c \from \RR \to \T(Z,z) 
\]
given by
\[ 
\widetilde c (x) = [f^x \from (Z,z) \to (Z,c(x))].
\]
The image of $\widetilde c$ lies in the Bers fiber over the basepoint $[\Id
\from Z \to Z] \in \T(Z)$; the fiber is identified with the universal
cover $\widetilde Z$ of $Z$.  As such, we can identify $\widetilde c$ 
with a lift of $c$ to $\widetilde Z$ and write
\[ 
\widetilde c \from \RR \to \widetilde Z \subset \T(Z,z). 
\]

Applying the Teichm\"uller mapping $g_t \from Z \to Z_t$ determined by
$q$ and $t \in \RR$ gives the {\em height coordinate}.  These
coordinates together define ${\bf Z} \from \HH^2 \to \T(Z,z)$
where
\[ 
{\bf Z}(x,t) = [g_t \circ f^x \from (Z,z) \to (Z_t,g_t(c(x)))].
\]
Here we are using the coordinates $(x,t)$ on $\HH^2$ described
in \refsec{hyperbolic}.

Since the marking homeomorphisms are determined by $x$ and $t$, we
simplify notation and denote the values in Teichm\"uller space by
\begin{equation} 
\label{E:alternatemap}
{\bf Z}(x,t) = \widetilde c_t(x) = (Z_t,g_t(c(x))).
\end{equation}
We also write 
\[ 
{\bf Z}(x,0) = \widetilde c(x) = (Z,c(x)).
\]
As the notation suggests, $\widetilde c_t \from \RR \to
\widetilde Z_t \subset \T(Z,z)$ is a lift of $g_t \circ c \from \mathbb
R \to Z_t$ to the universal cover $\widetilde Z_t$, thought of as the
fiber over $[g_t \from Z \to Z_t]$.

\begin{theorem}
\label{T:H2_isometric}
The map ${\bf Z} \from \HH^2 \to \T(Z,z)$ is an almost-isometric
embedding.  Moreover, the image lies in the thick part and is
quasi-convex.
\end{theorem}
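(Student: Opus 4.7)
The plan is to verify the four hypotheses of \reflem{criteria} for the map $F = {\bf Z}$ into $X = \T(Z,z)$, and then harvest the thick-part and quasi-convexity claims from the same construction. Thickness of the image is automatic: every point of the image lies in the Bers fiber over some $\tau_q(t) \in \T(Z)$, and since $\tau_q$ is thick, the Schwarz-Pick observation of \refsec{forget} puts the entire preimage $\Pi^{-1}(\tau_q)$ into $\T_\epsilon(Z,z)$ for the same $\epsilon$.

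Property~\ref{L:crit:geod} is built in: the path $t \mapsto {\bf Z}(x,t)$ is by definition the Teichm\"uller geodesic generated on the marked surface $(Z,c(x))$ by the differential $q$, which is a legal initial datum because $c(x)$ is nonsingular. Property~\ref{L:crit:unif} is a direct comparison: the two points ${\bf Z}(x,t)$ and ${\bf Z}(x',t)$ share the underlying Riemann surface $Z_t$ and differ only in their marked point, which has been pushed along a leaf of $\calF(q_t)$ by $q_t$-length $e^{-t}|x-x'|$ (Equation~(\ref{E:exp-length})). Since $Z_t$ lies in a uniform thick part, the $q_t$- and hyperbolic metrics there are uniformly bilipschitz, so the lifted path in the Bers fiber $\widetilde Z_t$ has bounded Poincar\'e length, and \refthm{kra} then bounds $d_\T$ by this Poincar\'e distance.

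For Property~\ref{L:crit:slim}, both $F \circ \eta_x$ and $F \circ \eta_{x'}$ are Teichm\"uller geodesics generated by the same $q$ and so share the vertical foliation $\calF(q)$. Since $\tau_q$ is thick, \refthm{Masur-ue} makes $\calF(q)$ uniquely ergodic on $Z$; with a single marked point this passes to unique ergodicity in $\MF(Z,z)$, using the observation on single-point enlargement from \refsec{subsurfaces}. Hence the two rays are forward asymptotic in $\T(Z,z)$, sharing the common endpoint $[\calF(q)] \in \PMF(Z,z)$, and together with the Teichm\"uller geodesic joining their distinct backward endpoints they form an ideal triangle. Uniform $\delta$-slimness of this triangle follows from the fact that \refthm{Rafi} makes all three pairs of endpoints uniformly cobounded, combined with the standard slimness of triangles with uniformly thick sides in Teichm\"uller space.

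Property~\ref{L:crit:prop} is the main obstacle. If it fails, then along a subsequence $e^{-t_k}|x_k - x_k'| \to \infty$ while $d_\T({\bf Z}(x_k,t_k),{\bf Z}(x_k',t_k))$ stays bounded; by \refthm{kra} the Poincar\'e distance in the Bers fiber $\widetilde Z_{t_k}$ also stays bounded, even though the two points are joined by a lift of a leaf-arc of $\calF(q_{t_k})$ whose $q_{t_k}$-length tends to infinity. The surfaces $Z_{t_k}$ project into a compact part of moduli space (thickness of $\tau_q$), so after extracting a subsequential limit one finds two distinct points on a leaf of an arational foliation on a limiting Riemann surface whose universal-cover lifts sit at bounded Poincar\'e distance, contradicting the properness of lifts of leaves of arational foliations. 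With the four hypotheses verified, \reflem{criteria} yields that ${\bf Z}$ is a $K$-almost-isometric embedding. Quasi-convexity of the image then follows by Morse-type stability for thick quasi-geodesics in $\T(Z,z)$: any Teichm\"uller geodesic between two points of the image stays within bounded Hausdorff distance of the $F$-image of the hyperbolic geodesic joining their preimages, and this $F$-image is contained in the image itself.
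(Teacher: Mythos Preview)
Your overall plan matches the paper's: verify the four properties of \reflem{criteria}, and establish thickness and quasi-convexity along the way. Properties~\ref{L:crit:geod} and~\ref{L:crit:unif} are handled exactly as in the paper, and your Property~\ref{L:crit:slim} argument is essentially the paper's (cobounded endpoint pairs via \refthm{Rafi}, then slimness of thick ideal triangles), though you route it through unique ergodicity where what is actually used is that the isotopy $f^x$ was chosen to preserve $\calF$, so that $\calF^x = \calF^0$ in $\MF(Z,z)$ on the nose.

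There is a genuine gap in your Property~\ref{L:crit:prop} argument. You pass to a subsequential limit and claim to obtain ``two distinct points on a leaf'' of the limiting foliation at bounded Poincar\'e distance. But the leaf-arclength between your two points is tending to infinity, so after normalizing one point at the origin the other is escaping along the lifted leaf; there is no reason it limits to a point \emph{on} the leaf, and no contradiction emerges from the limit alone. What you actually need is that the lifted leaves are \emph{uniformly} proper in $k$, and that is exactly the uniform quasi-isometry between $q_t$ and $\rho_0$ on $\widetilde Z_t$ that you already invoked for Property~\ref{L:crit:unif}. The paper uses this directly: from the lower bound in \refthm{kra} and the comparison $\rho_0 \geq \tfrac{1}{A}(d_{q_t} - B)$ one gets
\[
d_\T\big({\bf Z}(x,t),{\bf Z}(x',t)\big) \;\geq\; h\Big(\tfrac{1}{A}\big(e^{-t}|x-x'| - B\big)\Big),
\]
and properness of $h$ finishes it. No limiting argument is needed.

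Your quasi-convexity argument is also different from the paper's and, as written, is too thin. You appeal to ``Morse-type stability for thick quasi-geodesics,'' but the usual Morse/contracting statements for Teichm\"uller space take as hypothesis that the \emph{Teichm\"uller geodesic} is thick, which is what you are trying to conclude. The paper instead observes that the image lies within $\delta$ of the weak hull of the set $\{\calG^x\}_x \cup \{\calF^0\} \subset \PMF(Z,z)$, all of whose pairwise geodesics are uniformly thick, and then invokes the quasi-convexity of such weak hulls from \cite{shadows}. Your route can be made to work (cobounded endpoint pairs force the connecting geodesic to be thick, after which Minsky's contraction applies), but it needs those steps spelled out rather than asserted.
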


\begin{proof}
We verify the hypothesis of Lemma \ref{L:criteria} to prove that ${\bf
Z}$ is an almost-isometric embedding and, along the way, prove that
the image is quasi-convex and lies in the thick part.

First, fix any $x \in \RR$ so that $\eta_x(t) = (x,t)$ is a
vertical geodesic in $\HH^2$.  Then $t \mapsto {\bf Z} \circ
\eta_x(t) = {\bf Z}(x,t) = (Z_t,g_t(c(x)))$ is a Teichm\"uller
geodesic, and hence Property~\ref{L:crit:geod} of \reflem{criteria}
holds.  Furthermore, since $t \mapsto Z_t$ is a thick geodesic, we see
that $\{{\bf Z} \circ \eta_x(t)\}_{x \in \RR}$ are uniformly
thick geodesics.  That is, that the union of these geodesics, over all
$x \in \RR$, project into a compact subset of $\M(Z,z)$; namely,
the preimage of the compact subset of $\M(Z)$ containing the image of
$t \mapsto Z_t$ (see \refsec{forget}).  In particular, the image of
${\bf Z}$ lies in the thick part of $\T(Z,z)$.

For each $x \in \RR$, the geodesic ${\bf Z} \circ \eta_x$ is
defined by the quadratic differential $q \in \Q(Z)$ viewed as a
quadratic differential in $\Q(Z,c(x))$.  We denote the vertical and
horizontal foliations of $q \in \Q(Z,c(x))$ by $\calF^x$ and
$\calG^x$, respectively, and consider them as measured foliations
in $\MF(Z,z)$ by pulling them back via $f^x$.  Since $f^x$ preserves
$\calF$, it follows that $\calF^x = \calF^0 \in
\MF(Z,z)$ for all $x \in \RR$.

Now, since $t \mapsto Z_t$ is a thick geodesic, by \refthm{Masur-ue}
the foliations $\calF$ and $\calG$ are arational.  Puncturing an
arational foliation once gives an arational foliation in the punctured
surface.  Hence $\calF^x$ and $\calG^x$ are also arational for all
$x$.  Since $\calF^x = \calF^0$ for all $x \in \RR$ and since the
geodesics $\{{\bf Z} \circ \eta_x\}_{x \in \RR}$ are uniformly thick,
\refthm{Rafi} implies that there exists $K > 0$ so that the pairs
$(\calF^0,\calG^x) = (\calF^x, \calG^x)$ are $K$--cobounded for all
$x$.  By the triangle inequality (applied to each subsurface $Y$) we
see that for all $x, x' \in \RR$ the pair $(\calG^x, \calG^{x'})$ is
$2K$--cobounded (to see that $\calG^x$ and $\calG^{x'}$ are different
foliations, note that $(\calF^0,\calG^x)$ and $(\calF^0,\calG^{x'})$
define different geodesics ${\bf Z} \circ \eta_x$ and ${\bf Z} \circ
\eta_{x'}$, respectively).

Appealing to the other direction in \refthm{Rafi} the geodesic
$\Gamma^{x,x'}$, determined by $\calG^x$ and $\calG^{x'}$ for distinct
$x, x' \in \RR$, is uniformly thick, independent of $x$ and $x'$.
From this and \cite[Theorem 4.4]{shadows} it follows that there is a
$\delta > 0$ so that ${\bf Z}\circ \eta_x$, ${\bf Z} \circ \eta_{x'}$
and $\Gamma^{x,x'}$ are the sides of a $\delta$--slim triangle for
every pair of distinct points $x,x' \in \RR$, and hence
Property~\ref{L:crit:slim} of \reflem{criteria} holds.  From this, it
follows that ${\bf Z}(\HH^2)$ (is contained in and) has Hausdorff
distance at most $\delta$ from the union of the geodesics
\[ 
{\bf Z}(\HH^2) 
  \cup \left( \bigcup_{x \neq x' \in \RR} \Gamma^{x,x'} \right) 
= 
  \left( \bigcup_{x \in \RR} {\bf Z} \circ \eta_x \right) 
  \cup \left( \bigcup_{x \neq x' \in \RR} \Gamma^{x,x'}\right).
\]
This is precisely the {\em weak hull} of $\{\calG^x\}_{x \in
\RR} \cup \{\calF^0\} \subset \PMF(Z,z)$, and so according
to \cite[Theorem 4.5]{shadows}, this set, hence also ${\bf Z}(\mathbb
H^2)$, is quasi-convex (the assumption in \cite{shadows} that the
subset of $\PMF(Z)$ be closed was not used in the proof).


Finally, we must prove that Properties \ref{L:crit:unif} and
\ref{L:crit:prop} of Lemma \ref{L:criteria} hold.  For this we can
appeal directly to Theorem \ref{T:kra}.  More precisely, observe that
because $\{Z_t\}_{t \in \RR}$ lies in the thick part, the
pull-back of the flat metric on $\widetilde Z_t$ (which we also denote
$q_t$) is uniformly quasi-isometric to the Poincar\'e metric $\rho_0$
on $\widetilde Z_t$.  That is, there exist constants $A,B \geq 0$ so
that
\begin{equation} 
\label{E:flathyperbolic}
\frac{1}{A} \left( d_{q_t} (\widetilde z,\widetilde z') - B \right)
\leq \rho_0(\widetilde z,\widetilde z') \leq A \, d_{q_t}(\widetilde
z,\widetilde z') + B
\end{equation}
for all $t \in \RR$ and $\widetilde z,\widetilde z' \in Z_t$ (see for example \cite[Lemma 2.2]{FMcc}).

Applying \eqref{E:alternatemap}, the upper bound of \refthm{kra},
\eqref{E:flathyperbolic} and \eqref{E:exp-length}, in that order, we
find
\begin{align*}
d_\T\left({\bf Z}(x,t),{\bf Z}(x',t)\right) 
  & =    d_\T(\widetilde c_t(x),\widetilde c_t(x'))\\
  & \leq \rho_0(\widetilde c_t(x),\widetilde c_t(x'))\\
  & \leq A \, d_{q_t}(\widetilde c_t(x),\widetilde c_t(x')) + B\\
  & = A e^{-t} |x'-x| + B.
\end{align*}
So, setting $\epsilon = 1$ and $R = A + B$, Property~\ref{L:crit:unif}
of \reflem{criteria} holds.

On the other hand, \eqref{E:alternatemap}, the lower bound of Theorem \ref{T:kra}, monotonicity of $h$, and \eqref{E:exp-length} gives
\begin{align*}
d_\T\left({\bf Z}(x,t),{\bf Z}(x',t)\right) 
 & =    d_\T(\widetilde c_t(x),\widetilde c_t(x'))\\
 & \geq h(\rho_0(\widetilde c_t(x),\widetilde c_t(x')))\\
 & \geq h \left( \frac{1}{A} \left( d_{q_t}(\widetilde c_t(x),\widetilde c_t(x')) - B \right) \right)\\
 & = h \left( \frac{1}{A} (e^{-t}|x' - x| - B) \right).
\end{align*}
From this, and because $h$ is a homeomorphism on $[0,\infty)$ and
hence proper, Property~\ref{L:crit:prop} of Lemma \ref{L:criteria}
also holds.  This completes the proof of \refthm{H2_isometric}.
\end{proof}

\subsection{Hyperbolic $3$--space} 
\label{S:H^3}

Before diving into the construction, we explain the basic idea.  Our
embedding of the hyperbolic plane in \refsec{H^2} sends $(x,t)$ to
${\bf Z}(x,t) \in \T(Z,z)$ by pushing the marked point $z$ distance
$x$ along a leaf of the vertical foliation of a quadratic differential
then travelling distance $t$ along the Teichm\"uller flow.  There is a
simple extension of this construction which produces a map of
hyperbolic $3$--space into Teichm\"uller space $\T(Z,\{z,w\})$.  Take
$z$ and $w$ to lie on distinct leaves and send $(x,y,t)$ to the point
of $\T(Z,\{z,w\})$ obtained by pushing $z$ a distance $x$ along its
leaf, pushing $w$ a distance $y$ along its leaf, and applying the
Teichm\"uller flow for time $t$.

The problem is that whenever $z$ and $w$ move close to each other on
$Z$, the corresponding point in $\T(Z,\{z,w\})$ is in the thin part of
Teichm\"uller space; if $z$ and $w$ are very close to each other then
there is a simple closed curve surrounding $z$ and $w$ having an
annular neighborhood of large modulus.  This also shows that this map
$(x,y,t) \mapsto \T(Z,\{z,w\})$ is not a quasi-isometric embedding.
In fact the map is not even coarsely Lipschitz.

A more subtle construction is required.  We first choose a branched
cover $P \from \Sigma \to Z$, nontrivially branched at each point of
$P^{-1}(z)$.  According to \refprop{isometric-teich}, this induces an
isometric embedding of $\T(Z,z)$ into $\T(\Sigma)$.  Fix a suitably
generic point $w \in (Z, z)$ and pick a point $\sigma \in P^{-1}(w)$.
Roughly, we map our three parameters $(x,y,t)$ into
$\T(\Sigma,\sigma)$ as follows.  The coordinates $(x,t)$ determine
${\bf Z}(x,t) \in \T(Z,z)$ as in \refsec{H^2}.  The map $P^*$ applied
to ${\bf Z}(x,t)$ gives a point in $\T(\Sigma)$ as in
\refsec{branched}.  Finally use $y$ to determine a point ${\bf
\Sigma}(x,y,t) \in \T(\Sigma, \sigma)$, lying in the Bers fiber above
$P^* \circ {\bf Z}(x,t)$.  On its face, this new construction avoids
the problem we had before.  In $(Z, z)$ we have only one marked point;
after taking the branched covering over $z$ {\em we forget all of the
branch points over $z$}.  The single image of $\sigma$ can now move
freely enough so that we stay in the thick part of $\T(\Sigma,
\sigma)$.  We now explain this construction in more detail and prove
that the resulting map has all the required properties.

\subsubsection{The construction}

The notation from Section \ref{S:H^2} carries over to this section
without change.  Let $P \from \Sigma \to Z$ be a branched cover, branched
over the marked point $z \in Z$ so that $P$ is nontrivially branched
at every point of $P^{-1}(z)$.  This determines an isometric embedding
of Teichm\"uller spaces
\[ 
P^* \from \T(Z,z) \to \T(\Sigma)
\]
by Proposition \ref{P:isometric-teich}.  We write
\[ 
P^*([g_t \circ f^x \from (Z,z) \to (Z_t,g_t(c(x)))]) = [\phi_t^x \from \Sigma \to
  \Sigma_t^x] 
\]
so that $\phi_t^x$ is a lift of the marking $g_t \circ f^x$, and
$P_t^x$ is the induced branched cover making the following commute:
\[ 
\xymatrix{
\Sigma \ar[d]_P \ar[rr]^{\phi_t^x} & & \Sigma_t^x \ar[d]^{P_t^x}\\
(Z,z) \ar[rr]^{g_t \circ f^x} & & (Z_t,g_t(c(x))).} 
\]
The quadratic differentials $q_t$ pull back to quadratic differentials
$\lambda_t^x$ on $\Sigma_t^x$, and $g_t$ lifts to Teichm\"uller mappings
of the covers
\[ 
\psi_t^x \from \Sigma_0^x \to \Sigma_t^x
\]
so that $t \mapsto \Sigma_t^x$ is a Teichm\"uller geodesic for all
$x$.  The lifts satisfy $\phi_t^x = \psi_t^x \circ \phi_0^x$.  We have
another commutative diagram which may be helpful in organizing all the
maps:
\[ 
\xymatrix{
\Sigma \ar[d]_P \ar[rr]^{\phi_0^x} & & \Sigma_0^x 
          \ar[rr]^{\psi_t^x} \ar[d]^{P_0^x} & & \Sigma_t^x \ar[d]^{P_t^x}\\ 
(Z,z) \ar[rr]^{f^x} & & (Z,c(x)) 
          \ar[rr]^{g_t} & & (Z_t,g_t(c(x))).} 
\]

Denote the vertical foliation for $\lambda_t^x$ by $\Phi_t^x$.  Each
nonsingular leaf of $\Phi_t^x$ maps isometrically to a nonsingular
leaf of the vertical foliation $\calF_t$ for $q_t$ via the branched
covering $\Sigma_t^x \to Z_t$ since $\lambda_t^x$ is the pull back of
$q_t$.  Choose any nonsingular leaf $\gamma_0^0 \from \RR \to
\Sigma_0^0 = \Sigma$, parameterized by arc-length.  Observe that
$\gamma^0_0$ maps isometrically by $P$ to a leaf $\gamma \from \mathbb
R \to Z$ for $\calF$.  Note that $c$ and $\gamma$ are distinct leaves;
the preimage of $c$ in $\Sigma$ consists entirely of singular leaves,
namely the leaves that meet the branch points of $P$.

As we vary $x$, we can continuously choose lifts of $\gamma$ to leaves
$\gamma_0^x \from \RR \to \Sigma_0^x$ which agrees with our initial
leaf $\gamma_0^0$ when $x = 0$.  Specifically, we define the lift to
be
\[ 
\gamma_0^x 
 = \phi_0^x \circ \left( P|_{\gamma_0^0(\RR)}\right)^{-1} 
            \circ (f^x)^{-1} \circ \gamma.
\]
Composing with the lifts $\psi_t^x$, we obtain leaves $\gamma_t^x =
\psi_t^x \circ \gamma_0^x$.  Observe that via the branched covering
$P_t^x \from \Sigma_t^x \to Z_t$, $\gamma_t^x$ projects to the leaf $g_t
\circ \gamma$, independent of $x$.  Furthermore, this shows that the
$\lambda_t^x$--length of the arc $\gamma_t^x([y,y'])$ is the $q_t$--length
of $g_t \circ \gamma$ which is $e^{-t}|y-y'|$.

We pick a basepoint $\sigma = \gamma_0^0(0) \in \Sigma$, and consider
the surface $(\Sigma,\sigma)$, marked by the identity $\Id = \phi_0^0$
as a point in $\T(\Sigma,\sigma)$.  Just as we constructed $f^x$ by
pushing along $c$ to $c(x)$, we push $\sigma$ along $\gamma_t^x$ to
$\gamma_t^x(y)$ to obtain maps
\[ 
\xi_t^{x,y} \from (\Sigma,\sigma) \to (\Sigma_t^x,\gamma_t^x(y)).
\]
Specifically, we take $\xi_0^{x,y}$ to be the composition of $\phi_0^x$ and a map
isotopic to the identity on $\Sigma_0^x$ which preserves the foliation
$\Phi_0^x$ and pushes $\phi_0^x(\sigma)$ along $\gamma_0^x$ to
$\gamma_0^x(y)$.  Then $\xi_t^{x,y} = \psi_t^x \circ \xi_0^{x,y}$ maps
the foliation $\Phi_0^0$ to $\Phi_t^x$.

We denote the associated point in Teichm\"uller space
$[\xi_t^{x,y} \from (\Sigma,\sigma) \to (\Sigma_t^x,\gamma_t^x(y))] \in
\T(\Sigma,\sigma)$ simply by $(\Sigma_t^x,\gamma_t^x(y))$ as this
point is uniquely determined in this construction by $(x,y,t)$.

We define
\[ 
{\bf \Sigma} \from \HH^3 \to \T(\Sigma,\sigma) 
\]
in the coordinates $(x,y,t)$ for $\HH^3$ from Section
\ref{S:hyperbolic} by
\[ 
{\bf \Sigma}(x,y,t) = (\Sigma_t^x,\gamma_t^x(y)).
\]

\subsubsection{Fibration over $\HH^2$ case}

We also require a slightly different description of the map ${\bf
\Sigma}$ to take advantage of the construction in the $\HH^2$
case.  Observe that $P^* \circ {\bf Z} \from \HH^2 \to \T(Z,z) \to
\T(\Sigma)$ is an almost-isometric embedding, and is given by
\[ 
P^* \left( {\bf Z}(x,t) \right) = \Sigma_t^x,
\]
where $\Sigma_t^x$ denotes the point $[\phi_t^x \from \Sigma \to
\Sigma_t^x]$.  Recall that
\[
\Pi \from \T(\Sigma,\sigma) \to \T(\Sigma). 
\]
is the Bers fibration.  If we fix $(x,t) \in \HH^2$, then for
every $y$ we see that $(\Sigma_t^x,\gamma_t^x(y))$ is contained the
fiber $\Pi^{-1}(\Sigma_t^x)$.  Since $\Pi^{-1}(\Sigma_t^x)$ is
identified with the universal covering $\widetilde \Sigma_t^x$ of
$\Sigma_t^x$, just as in the case of $\HH^2$ we see that $t
\mapsto (\Sigma_t^x,\gamma_t^x(y))$ is a lift of $\gamma_t^x$ to
$\widetilde \Sigma_t^x \subset \T(\Sigma,\sigma)$.  As such, we use
the alternative notation
\[ 
\widetilde \gamma_t^x \from \RR \to \widetilde \Sigma_t^x \subset
\T(\Sigma,\sigma)
\]
with
\[ 
\widetilde \gamma_t^x(y) = (\Sigma_t^x,\gamma_t^x(y)) 
\]
when it is convenient to do so.

Finally we record the equation
\begin{equation} 
\label{E:fibration}
 \Pi \circ {\bf \Sigma}(x,y,t) = P^* \circ {\bf Z}(x,t)
\end{equation}
which holds for all $(x,y,t) \in \HH^3$.  The fact that $\Pi$ is
$1$--Lipschitz and $P^* \circ {\bf Z}$ is an almost-isometric
embedding provides us with useful metric information about ${\bf
\Sigma}$.

\begin{theorem}
\label{T:H3_isometric}
The map ${\bf \Sigma} \from \HH^3 \to \T(\Sigma, \sigma)$ is an
almost-isometric embedding.  Moreover, the image lies in the thick
part and is quasi-convex.
\end{theorem}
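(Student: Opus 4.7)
The plan is to verify the four hypotheses of \reflem{criteria} for ${\bf \Sigma}$, in parallel with the proof of \refthm{H2_isometric}, but additionally exploiting the Bers fibration identity $\Pi \circ {\bf \Sigma} = P^* \circ {\bf Z}$ from~\eqref{E:fibration}. By construction each map $t \mapsto {\bf \Sigma}(x, y, t)$ is a Teichm\"uller geodesic, so Property~\ref{L:crit:geod} holds. For thickness, the $\Pi$--image of this geodesic is $P^* \circ {\bf Z}(x, \cdot)$, which lies in a thick part of $\T(\Sigma)$ by \refthm{H2_isometric} and \refprop{isometric-teich}; since puncturing a closed surface only increases the hyperbolic systole, ${\bf \Sigma}(\HH^3)$ lies in $\T_{\epsilon'}(\Sigma, \sigma)$ for some $\epsilon' > 0$.

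I then identify the vertical foliation in $\MF(\Sigma, \sigma)$ of every geodesic ${\bf \Sigma}(x, y, \cdot)$ as the common foliation $\Phi := \Phi_0^0$: since $\xi_0^{x, y}$ maps $\Phi$ to $\Phi_0^x$ by construction, the pullback of $\Phi_0^x$ via this marking is always $\Phi$, independent of $(x, y)$. Applying \refthm{Masur-ue} to these uniformly thick geodesics, $\Phi$ together with the horizontal foliation $\Psi^{x, y}$ of each such geodesic is arational and uniquely ergodic in $\MF(\Sigma, \sigma)$. By \refthm{Rafi}, every pair $(\Phi, \Psi^{x, y})$ is $K$--cobounded for a uniform $K$; the triangle inequality for subsurface projections then gives $2K$--coboundedness of $(\Psi^{x, y}, \Psi^{x', y'})$, and the other direction of \refthm{Rafi} provides uniform thickness for the geodesic $\Gamma^{(x,y),(x',y')}$ connecting them. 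Applying \cite[Theorem 4.4]{shadows} produces a uniform $\delta$ for which each triple $({\bf \Sigma}(x, y, \cdot), {\bf \Sigma}(x', y', \cdot), \Gamma^{(x,y),(x',y')})$ bounds a $\delta$--slim ideal triangle, verifying Property~\ref{L:crit:slim}. Quasi-convexity of ${\bf \Sigma}(\HH^3)$ then follows from \cite[Theorem 4.5]{shadows} by identifying it, up to bounded Hausdorff distance, with the weak hull of $\{\Psi^{x, y}\}_{(x, y) \in \RR^2} \cup \{\Phi\}$, exactly as in the proof of \refthm{H2_isometric}.

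For Properties~\ref{L:crit:unif} and~\ref{L:crit:prop} I separate the contributions of the $x$ and $y$ coordinates. For the upper bound, I build a quasi-conformal map between $(\Sigma_t^x, \gamma_t^x(y))$ and $(\Sigma_t^{x'}, \gamma_t^{x'}(y'))$ in the correct isotopy class as a composition of: (i) the $P$--lift of the downstairs Teichm\"uller mapping from $(Z_t, g_t(c(x)))$ to $(Z_t, g_t(c(x')))$, whose dilatation is controlled by $A e^{-t}|x - x'| + B$ via \refthm{H2_isometric} and which, provided each $f^x$ has been arranged to be supported in a thin neighborhood of $c$ disjoint from $\gamma$, carries $\gamma_t^x(y)$ to $\gamma_t^{x'}(y)$; and (ii) a push of $\gamma_t^{x'}(y)$ to $\gamma_t^{x'}(y')$ along $\gamma_t^{x'}$ inside $\Sigma_t^{x'}$, whose dilatation is at most $A e^{-t}|y-y'| + B$ by \refthm{kra} and~\eqref{E:flathyperbolic} applied inside the Bers fiber, using that $\ell_{\lambda_t^{x'}}(\gamma_t^{x'}|_{[y, y']}) = e^{-t}|y - y'|$. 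For the lower bound, suppose $e^{-t_k}|(x_k - x'_k, y_k - y'_k)| \to \infty$; on a subsequence either $e^{-t_k}|x_k - x'_k| \to \infty$, in which case the $1$--Lipschitz property of $\Pi$ (\refprop{1-lip-teich}) combined with Property~\ref{L:crit:prop} for the almost-isometric $P^* \circ {\bf Z}$ forces the distance to $\infty$; or $e^{-t_k}|x_k - x'_k|$ stays bounded while $e^{-t_k}|y_k - y'_k| \to \infty$, and I apply the triangle inequality through the intermediate point ${\bf \Sigma}(x_k, y'_k, t_k)$, bounding the in-fiber distance below by $h \bigl( \tfrac{1}{A}(e^{-t_k}|y_k - y'_k| - B) \bigr) \to \infty$ via the lower estimate in \refthm{kra}, and the cross-fiber distance above by the upper bound just proved.

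The main obstacle I anticipate is the bookkeeping in part~(i) of the upper-bound construction: carefully tracing the lift of the downstairs Teichm\"uller mapping through the compatible maps $\phi_t^x, \psi_t^x$ and the chosen leaf lifts $\gamma_t^x$ of $g_t \circ \gamma$, to confirm that after arranging each $f^x$ to be supported near $c$ and disjoint from $\gamma$, the lifted map sends the parameterized leaf $\gamma_t^x$ identically to $\gamma_t^{x'}$ and lies in the isotopy class of $\xi_t^{x', y} \circ (\xi_t^{x, y})^{-1}$. Once this marked-point correspondence is verified, the distance estimates combine cleanly and all four hypotheses of \reflem{criteria} follow.
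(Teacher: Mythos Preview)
Your treatment of Properties~\ref{L:crit:geod}, \ref{L:crit:slim}, thickness, quasi-convexity, and Property~\ref{L:crit:prop} matches the paper's argument essentially line for line. The gap is in Property~\ref{L:crit:unif}.

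In step~(i) you want to lift the Teichm\"uller mapping $h \from (Z_t, C_x) \to (Z_t, C_{x'})$ (where $C_x = g_t(c(x))$) through the branched cover and assert that the lift carries $\gamma_t^x(y)$ to $\gamma_t^{x'}(y)$, provided the $f^x$ were chosen supported away from $\gamma$. But these two demands are incompatible. The dilatation bound $Ae^{-t}|x-x'|+B$ applies to the \emph{extremal} map in the isotopy class rel $C_x$; that map is determined by its extremal property and ignores $\gamma$, no matter which representative $f^x$ you picked. Conversely, any quasi-conformal map in this class that \emph{does} fix $\Gamma_y = g_t(\gamma(y))$ in the correct relative isotopy class has log-dilatation bounded below by the Teichm\"uller distance in $\T(Z,\{z,w\})$ between $(Z_t,\{C_x,\Gamma_y\})$ and $(Z_t,\{C_{x'},\Gamma_y\})$. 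Since $c$ and $\gamma$ are dense leaves of an arational foliation, $\Gamma_y$ lands arbitrarily close to $C_x$ for infinitely many $(x,y)$, and that two-pointed distance is then unbounded---this is precisely the failure of coarse Lipschitzness flagged at the start of \refsec{H^3}. So the ``bookkeeping'' you anticipate is not a matter of chasing lifts; the claimed marked-point correspondence is simply false for the extremal map.

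The paper's fix is to pass to the auxiliary space $\T(Z,\{z,w\})$ via \refprop{1-lip-teich} and insert a detour: if $\Gamma_y \in B_{q_t}(C_x,2\epsilon)$, first push $y$ to some $y'$ with $e^{-t}|y-y'| \leq 2\epsilon$ so that $\Gamma_{y'}$ exits the ball (possible because the leaf is a $q_t$--geodesic and the ball is an embedded disk), then vary $x$ at uniformly bounded cost, then push $y'$ back to $y$. Three applications of the triangle inequality give the uniform $R$.
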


\begin{proof}
As before, we will verify the hypothesis of \reflem{criteria} to prove
that ${\bf \Sigma}$ is an almost-isometry and, along the way, prove
that the image is quasi-convex and lies in the thick part.

For all $(x,y) \in \RR^2$, the geodesic $\eta_{(x,y)}(t)$ in $\HH^3$
is sent to
\[ 
{\bf \Sigma} \circ \eta_{(x,y)}(t) = \left( \Sigma_t^x,\gamma_t^x(y) \right) 
  = \left( \psi_t^x(\Sigma_0^x),\psi_t^x(\gamma_0^x(y)) \right).
\]
This is a geodesic in $\T(\Sigma,\sigma)$ because $\psi_t^x \from
\Sigma_0^x \to \Sigma_t^x$ is a Teichm\"uller mapping; thus
Property~\ref{L:crit:geod} follows.  Furthermore, note that ${\bf
\Sigma} \circ \eta_{(x,y)}(t)$ lies over $P^*\circ {\bf Z} \circ
\eta_x(t)$ for all $(x,y,t)$.  Since $P$ is nontrivially branched over
every point, the uniform thickness of the set of geodesics $\{{\bf Z}
\circ \eta_x(t)\}_{x \in \RR}$ implies the same for $\{ P^* \circ {\bf
Z} \circ \eta_x(t)\}_{x \in \RR}$ by Proposition
\ref{P:isometric-teich}, and hence also for $\{ {\bf \Sigma} \circ
\eta_{(x,y)}(t) \st (x,y) \in \RR^2 \}$ by \eqref{E:fibration} as
discussed in Section \ref{S:forget}.  That is, ${\bf \Sigma}(\HH^3)$
lies in the thick part.

By our choice of maps $\xi_0^{x,y}$, if we pull back the vertical
foliation $\Phi_0^x$ of $\lambda_0^x$ to a foliation $\Phi_0^{x,y} \in
\MF(\Sigma,\sigma)$ the result is independent of $x$ and $y$.
Furthermore, Theorem \ref{T:Masur-ue} implies that these foliations,
as well as the pull backs of the horizontal foliations, are arational.
Thus all strict subsurface projection distances are defined.
\refthm{Rafi} and the results of \cite{shadows} can be applied as in
the $\HH^2$ case to prove that Property \ref{L:crit:slim} of
\reflem{criteria} is satisfied for some $\delta > 0$.  Furthermore,
${\bf \Sigma}(\HH^3)$ is quasi-convex.

We now come to the subtle point of the proof, which is verifying
Properties~\ref{L:crit:unif} and~\ref{L:crit:prop} of Lemma
\ref{L:criteria}.  We start with Property~\ref{L:crit:unif}.

\begin{claim*}
There exists $\epsilon > 0$ and $R > 0$ so that if $e^{-t}
\left| (x,y)-(x',y') \right| < \epsilon$ then
\[
d_\T \left( {\bf \Sigma}(x,y,t),{\bf \Sigma}(x',y',t) \right) < R. 
\]
\end{claim*}

Before we give the proof, we briefly explain the core technical
difficulty.  Fix $t$ and define $C_x = g_t(c(x))$ and $\Gamma_y =
g_t(\gamma(y))$.  Observe that, as before, when we vary $y$ we are
simply point pushing; thus the change in Teichm\"uller distance is
controlled by Theorem \ref{T:kra}.  On the other hand, varying $x$
means that we are varying the conformal stucture on the closed surface
$\Sigma_t^x$.  This is obtained by varying $x$ in $(Z_t, C_x)$ (which
is also point pushing) then taking a branched cover.  However, while
we vary $C_x$ in $Z_t$ we must also keep track of our $y$ coordinate,
which means we should also project $\gamma_t^x(y)$ down to
$Z_t$---this is precisely the point $\Gamma_y$.  Now if $C_x$ and
$\Gamma_y$ are close together and we vary $x$ so as to push these
points apart, then this can result in a large distance in the
``auxiliary'' Teichm\"uller space $\T(Z, \{z, w\})$, even for small
variation of $x$.  The idea is therefore to first vary $y$, if
necessary, to move $\gamma_t^x(y)$ in $\Sigma_t^x$ and so guaranteeing
that $\Gamma_y$ is not too close to $C_x$.  We can then vary $x$ as
required, then vary $y$ back to its original value.  Since the
variation of $y$ can be carried out independent of $x$, this will
result in a uniformly bounded change in Teichm\"uller distance.

\begin{proof}[Proof of Claim.]
Since the surfaces $\{\Sigma_t^x\}_{t,x \in \RR}$ lie in the
thick part, the (pulled back) metrics $\lambda_t^x$ and the Poincar\'e
metric(s) $\rho_0$ on the universal cover $\widetilde \Sigma_t^x$ are
uniformly comparable.  That is, there exist constants $A$ and $B$ so
that for all $\widetilde \sigma,\widetilde \sigma' \in \widetilde
\Sigma_t^x$
\begin{equation} 
\label{E:flathyperbolic2}
\frac{1}{A} \left( d_{\lambda_t^x} (\widetilde \sigma,\widetilde \sigma') - B \right) 
     \leq \rho_0 \left( \widetilde \sigma,\widetilde \sigma' \right) \leq A \, 
              d_{\lambda_t^x} \left( \widetilde \sigma,\widetilde \sigma' \right) + B.
\end{equation}
Applying Theorem \ref{T:kra}, Equations~\eqref{E:flathyperbolic2}
and~\eqref{E:exp-length} we have
\begin{align} 
\label{E:halfupper3}
d_\T \left( {\bf \Sigma}(x,y,t),{\bf \Sigma}(x,y',t) \right) 
 & =    d_\T \left( \widetilde \gamma_t^x(y),\widetilde \gamma_t^x(y') \right) \\ \nonumber
 & \leq \rho_0 \left( \widetilde \gamma_t^x(y),\widetilde \gamma_t^x(y') \right) \\ \nonumber
 & \leq A d_{\lambda_t^x} \left( \widetilde \gamma_t^x(y),\widetilde \gamma_t^x(y') \right) + B \\ \nonumber 
 & =    A \left( e^{-t} \left| y - y' \right| \right) + B. \nonumber
\end{align}

We now fix $t$ and the notation $C_x = g_t(c(x))$, $\Gamma_y =
g_t(\gamma(y))$.  To understand the effect of varying $x$ we must
consider the branched covering $P_t^x \from \Sigma_t^x \to (Z_t,
C_x)$, but also keep track of the image of our marked point
$\gamma_t^x(y) = \psi_t^x(\gamma_0^x(y))$ down in $(Z_t, C_x)$; that
is, the point $\Gamma_y$.  This results in the surface $Z_t$ with {\em
two marked points}:
\[ 
(Z_t, \{C_x, \Gamma_y\}).
\]
Appealing to Proposition \ref{P:1-lip-teich} we have
\begin{equation} 
\label{E:branchedforgetbounds}
d_\T \left( {\bf \Sigma}(x,y,t),{\bf \Sigma}(x',y',t) \right) 
   \leq d_\T \left( (Z_t, \{C_x,\Gamma_y\}), (Z_t,\{C_{x'},\Gamma_{y'}\}) \right). 
\end{equation}
This is because we are taking a branched covering, $\Sigma_t^x \to
Z_t$, and then forgetting all but one of the marked points in
$\Sigma_t^x$.

Since $Z_t$ lies in some fixed thick part of $\T(Z)$ for all $t \in
\RR$, there exists $\epsilon > 0$ so that the $2\epsilon$--ball
about $C_x$ in the $q_t$ metric, $B_{q_t}(C_x, 2 \epsilon)$ is a disk
for all $t,x \in \RR$ (that is, we have a lower bound on the
$q_t$--injectivity radius of $Z_t$, independent of $t$).  Now suppose
$\Gamma_y$ lies outside this ball
\[ 
\Gamma_y \not\in B_{q_t}(C_x, 2 \epsilon).
\]
Using again the fact that $Z_t$ lies in some thick part of $\T(Z)$ for
all $t \in \RR$, it follows that there is some $R' > 0$ with the
property that for any point $z' \in B_{q_t}(C_x, \epsilon)$ we have
\[ 
d_\T \left( (Z_t,\{C_x, \Gamma_y\}),(Z_t,\{z',\Gamma_y\}) \right) 
  < R'.
\]
Here the marking homeomorphism for $(Z_t,\{z',\Gamma_y\})$ is assumed
to differ from that of $(Z_t,\{C_x,\Gamma_y\})$ by composition with a
homeomorphism of $Z_t$ that is the identity outside
$B_{q_t}(C_x,2\epsilon)$.  In particular, if $e^{-t}|x-x'| < \epsilon$
and, crucially, $\Gamma_y \not\in B_{q_t}(C_x,2\epsilon)$ then deduce
that $C_{x'} \in B_{q_t}(C_x,\epsilon)$ and, from
Equation~\eqref{E:branchedforgetbounds}, that
\begin{align}  
\label{E:movexbound}
d_\T \left({\bf \Sigma}(x,y,t),{\bf \Sigma}(x',y,t) \right) 
  & \leq d_\T \left((Z_t,\{C_x,\Gamma_y\}),(Z_t,\{C_{x'}, \Gamma_y\}) \right)
    <    R'.
\end{align}

On the other hand, because the leaves of $\calF$ are geodesics for
$q_t$ and because $B_{q_t}(C_x, 2 \epsilon)$ is a disk,
if $\Gamma_y \in B_{q_t}(C_x, 2 \epsilon)$ then there exists $y' \in
\RR$ so that $e^{-t}|y'-y| \leq 2 \epsilon$ and
\[ 
\Gamma_{y'} \not\in B_{q_t}(C_x, 2\epsilon).
\]
Then, from \eqref{E:movexbound} we have
\begin{align*}
d_\T\left({\bf \Sigma}(x,y',t),{\bf \Sigma}(x',y',t) \right)  
 & \leq d_\T\left((Z_t, \{C_x,\Gamma_{y'}\}) , (Z_t, \{ C_{x'}, \Gamma_{y'}\})\right) 
   <    R'.
\end{align*}

Combining this, inequalities \eqref{E:halfupper3} and
\eqref{E:movexbound}, and the triangle inequality, it follows that for
any $x,y,x',t$ with $e^{-t}|x-x'| \leq \epsilon$ there is some $y' \in
\RR$ with $e^{-t}|y'-y| \leq 2 \epsilon$ such that
\begin{align} 
\label{E:bigmovexbound}
d_\T({\bf \Sigma}(x,y,t),{\bf \Sigma}(x',y,t)) 
 & \leq d_\T({\bf \Sigma}(x,y,t),{\bf \Sigma}(x,y',t)) + d_\T({\bf \Sigma}(x,y',t),{\bf \Sigma}(x',y',t)) \\ \nonumber
 & \phantom{\leq\,} + d_\T({\bf \Sigma}(x',y',t),{\bf \Sigma}(x',y,t)) \\ \nonumber
 & \leq 2(A(e^{-t}|y - y'|) + B) + R'\\ \nonumber
 & <    2(A2\epsilon + B)+R'\\ \nonumber
 & \leq 4(A \epsilon + B) + R'. 
\end{align}
Now, let $\epsilon> 0$ be as above and set $R = 5 (A \epsilon + B) +
R'$.  Given $(x,y,t),(x',y',t) \in \HH^3$ with
$e^{-t}|(x,y)-(x',y')| < \epsilon$, then we have
$e^{-t}|x-x'|,e^{-t}|y-y'| \leq e^{-t}|(x,y)-(x',y')| < \epsilon $.
Applying Equations~\eqref{E:halfupper3} and~\eqref{E:bigmovexbound}
and the triangle inequality we obtain
\begin{align*}
d_\T \left({\bf \Sigma}(x,y,t),{\bf \Sigma}(x',y',t)\right) 
 & \leq d_\T \left({\bf \Sigma}(x,y,t),{\bf \Sigma}(x',y,t)\right) 
        + d_\T\left({\bf \Sigma}(x',y,t),{\bf \Sigma}(x'y',t)\right) \\
 & \leq 4(A \epsilon + B) + R' + A \epsilon + B \\
 & <    5(A \epsilon + B) + R' = R.
\end{align*}
This completes the proof of the claim, and so verifies Property
\ref{L:crit:unif} of \reflem{criteria}.
\end{proof}

All that remains to show is Property~\ref{L:crit:prop} of
\reflem{criteria}.  Suppose we have a sequence of pairs
$\{(x_n,y_n,t_n),(x_n',y_n',t_n)\}_{n=1}^\infty$ with
$e^{t_n}|(x_n,y_n)-(x_n',y_n')| \to \infty$ as $n \to \infty$.  Then,
up to subsequence, we must be in one of two cases.

\begin{case*}
 $e^{t_n}|x_n - x_n'| \to \infty$ as $n \to \infty$.
\end{case*}

Forgetting the marked point is $1$--Lipschitz, and so we have
\begin{align*}
d_\T\left({\bf \Sigma}(x_n,y_n,t_n),{\bf \Sigma}(x_n',y_n',t_n)\right) 
 & \geq d_\T\left(\Sigma_{t_n}^{x_n},\Sigma_{t_n}^{x_n'}\right) \\
 & =    d_\T\left((Z_{t_n},g_{t_n}(x_n)),(Z_{t_n},g_{t_n}(x_n'))\right) \\
 & =    d_\T\left({\bf Z}(x_n,t_n),{\bf Z}(x_n',t_n)\right).
\end{align*}
However, we have already verified that ${\bf Z} \from \HH^2 \to
\T(Z,z)$ satisfies Lemma \ref{L:criteria}. Therefore the last
expression tends to infinity, and hence
\[ \lim_{n \to \infty} d_\T({\bf \Sigma}(x_n,y_n,t_n),{\bf \Sigma}(x_n',y_n',t_n)) = \infty\]
as required.

\begin{case*}
$e^{t_n}|y_n - y_n'| \to \infty$ as $n \to \infty$.
\end{case*}

If we also have $e^{t_n}|x_n - x_n'| \to \infty$, then we can appeal
to the previous case and we are done.  So we assume, as we may, that
$e^{t_n}|x_n-x_n'| < M$, for some constant $M > 0$.  Since we have
already shown that there are $\epsilon,R > 0$ so that part
\ref{L:crit:unif} from Lemma \ref{L:criteria} holds, it follows from \eqref{E:alt3}that
\[ 
d_\T\left( {\bf \Sigma}(x_n,y_n',t_n),{\bf \Sigma}(x_n',y_n',t_n)\right) 
 \leq \frac{R}{\epsilon} \left( e^{-t_n}|x_n - x_n'| \right) + R 
 \leq \frac{R}{\epsilon} M + R.
\]
Now, by the triangle inequality we have
\begin{align} 
\label{E:readyforkra}
d_\T \left( {\bf \Sigma}(x_n,y_n,t_n),{\bf \Sigma}(x_n',y_n',t_n) \right) 
 & \geq d_\T \left( {\bf \Sigma}(x_n,y_n,t_n),{\bf \Sigma}(x_n,y_n',t_n) \right) \\ \nonumber
 & \phantom{\geq\,} - d_\T \left( {\bf \Sigma}(x_n,y_n',t_n),{\bf \Sigma}(x_n',y_n',t_n) \right) \\ \nonumber
 & \geq d_\T \left( \widetilde{\gamma}_{t_n}^{x_n}(y_n),\widetilde{\gamma}_{t_n}^{x_n}(y_n') \right) 
     - \frac{R}{\epsilon}M - R \nonumber
\end{align}
We can now appeal to Theorem \ref{T:kra} as in our proof for ${\bf Z}
\from \HH^2 \to \T(Z,z)$ to find $A,B$ so that
\[
d_\T(\widetilde{\gamma}_{t_n}^{x_n}(y_n),\widetilde{\gamma}_{t_n}^{x_n}(y_n'))
\geq h \left( \frac{1}{A}e^{-{t_n}}|y_n' - y_n| - B\right)
\] 
The right-hand side tends to infinity by the properness of $h$, so we
can combine this with \eqref{E:readyforkra} to obtain
\[ 
\lim_{n \to \infty} d_\T({\bf \Sigma}(x_n,y_n,t_n),{\bf
  \Sigma}(x_n',y_n',t_n)) = \infty
\]
as required.  Therefore, Property~\ref{L:crit:prop} from
\reflem{criteria} holds, and the proof of \refthm{H3_isometric} is
complete.
\end{proof}

\subsection{The general case}

The previous arguments set up an inductive scheme for producing
almost-isometric embeddings of $\HH^n$ into Teichm\"uller
spaces.  The idea is as follows.

For $n-1 \geq 3$, induction gives us an almost-isometric embedding
${\bf W} \from \HH^{n-1} \to \T(W,w)$ satisfying all the
hypotheses of Lemma \ref{L:criteria} for some closed surface $W$ with
a marked point $w$.  We again take a branched cover
\[
P \from \Omega \to W
\]
nontrivially branched over each point in $P^{-1}(w) \subset \Omega$.
This determines a map
\[ 
P^*\circ {\bf W} \from \HH^{n-1} \to \T(\Omega). 
\]
Using the coordinates $(x, t) = (x_1, x_2, \ldots, x_{n-2}, t) \in
\HH^{n-1}$ we write
\[ 
P^* \circ {\bf W}(x,t) = \Omega_t^x.
\]

Inductively, we assume that the foliation of $\HH^{n-1}$ by asymptotic
geodesics $\{\eta_x(t)\}_{x \in \RR^{n-2}}$ are all mapped by ${\bf
W}$ to uniformly thick geodesics in $\T(W,w)$, so the same is true for
$P^* \circ W$.  These geodesics are obtained by applying the
Teichm\"uller mapping $\psi_t^x \from \Omega_0^x \to \Omega_t^x$
giving
\[ 
P^* \circ {\bf W} (x,t) = \psi_t^x \circ P^* \circ {\bf W}(x,0) 
\]
for all $x \in \RR^{n-2}$ and $t \in \RR$.  Furthermore, the defining
quadratic differentials all have the same vertical foliation.

We pick a leaf of this foliation $\gamma \from \RR \to \Omega$, and
arguing as before, this determines a leaf in each surface $\gamma_t^x
\from \RR \to \Omega_t^x$ with $\gamma_0^0= \gamma \from \mathbb R \to
\Omega_0^0 = \Omega$.  Now, pick $\omega = \gamma_0^0(0)$ to be our
marked point, add a factor of $\RR$ to $\HH^{n-1}$ with coordinate $y
= x_{n-1}$ to obtain $\HH^n$ with coordinates $(x, y, t) = (x_1,
\ldots, x_{n-2}, x_{n-1}, t)$, and define
\[ 
{\bf \Omega} \from \HH^n \to \T(\Omega,\omega)
\]
by
\[ 
{\bf \Omega}(x,y,t) = (\Omega_t^x,\gamma_t^x(y)).
\]
So we are again pushing a point along a leaf of the vertical foliation.

\begin{theorem}
\label{T:Hn_isometric}
The map ${\bf \Omega} \from \HH^n \to \T(\Omega, \omega)$ is an
almost-isometric embedding.  Moreover, the image lies in the thick
part and is quasi-convex.
\end{theorem}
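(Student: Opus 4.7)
The plan is to verify the hypotheses of \reflem{criteria} for ${\bf \Omega}$ by mirroring the proof of \refthm{H3_isometric} essentially verbatim, with the inductive embedding ${\bf W} \from \HH^{n-1} \to \T(W,w)$ playing the role previously played by ${\bf Z}$, and the branched cover $P \from \Omega \to W$ playing the role of $P \from \Sigma \to Z$. The key structural identity $\Pi \circ {\bf \Omega}(x,y,t) = P^* \circ {\bf W}(x,t) = \Omega_t^x$ combined with $\Pi$ being $1$--Lipschitz (\refprop{1-lip-teich}) and $P^*$ being an isometric embedding (\refprop{isometric-teich}) lets us transfer inductive information about ${\bf W}$ to information about ${\bf \Omega}$.

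For Property~(\ref{L:crit:geod}) and thickness: each $t \mapsto {\bf \Omega}(x,y,t)$ is a Teichm\"uller geodesic because $\psi_t^x$ is a Teichm\"uller mapping. The inductive uniform thickness of the family $\{{\bf W}\circ\eta_x\}_{x\in\RR^{n-2}}$ passes through $P^*$ by \refprop{isometric-teich}, and then up to $\T(\Omega,\omega)$ via Mumford compactness as discussed in \refsec{forget}. For Property~(\ref{L:crit:slim}) and quasi-convexity: the pull-backs of the vertical foliations of $\lambda_t^x$ to $\MF(\Omega,\omega)$ are all the single foliation $\Phi$ by construction of $\xi_0^{x,y}$; \refthm{Masur-ue} forces $\Phi$ and each pulled-back horizontal foliation to be arational, so subsurface projections are defined, and \refthm{Rafi} together with \cite[Theorems 4.4 and 4.5]{shadows} yield a uniform slimness constant and quasi-convexity, exactly as in the proof of \refthm{H3_isometric}.

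The technical heart of the induction is Property~(\ref{L:crit:unif}), and this is where I expect the main obstacle. Varying $y$ alone is point-pushing on $\Omega_t^x$; since $\Omega_t^x$ lies in a fixed thick part the flat and Poincar\'e metrics on $\widetilde\Omega_t^x$ are uniformly comparable, so \refthm{kra} combined with Equation~(\ref{E:exp-length}) gives $d_\T({\bf\Omega}(x,y,t),{\bf\Omega}(x,y',t)) \leq A e^{-t}|y-y'|+B$. Varying $x$ alone requires the ``nudge $y$'' argument: use \refprop{1-lip-teich} to pass through the branched covering $\Omega_t^x \to W_t^x$ and forget all but the marked point descending from $\omega$, reducing the distance to one in an auxiliary Teichm\"uller space of $W_t^x$ with two marked points; then nudge $y$ to a bounded $\bar y$ so that the image of $\gamma_t^x(\bar y)$ in $W_t^x$ lies outside a fixed-size $q_t$-ball around the other marked point, invoke the inductive Property~(\ref{L:crit:unif}) for ${\bf W}$ (transferred through $P^*$) to move $x$ to $x'$ at uniformly bounded cost, and finally nudge $\bar y$ back to $y$. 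The new complication relative to the $\HH^3$ case is that $W_t^x$ itself varies with $x$, rather than being a fixed surface $Z_t$; however, the inductive Property~(\ref{L:crit:unif}) for ${\bf W}$ keeps $W_t^x$ and $W_t^{x'}$ uniformly close when $e^{-t}|x-x'|<\epsilon$, and the uniform thickness of the family $\{W_t^x\}$ provides a uniform lower bound on the $q_t$--injectivity radius in which the nudge can be carried out.

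Finally, for Property~(\ref{L:crit:prop}), I would split into the two cases from the proof of \refthm{H3_isometric}. Given a sequence with $e^{-t_k}|(x_k,y_k)-(x_k',y_k')|\to\infty$, pass to a subsequence. If $e^{-t_k}|x_k-x_k'|\to\infty$, then since $\Pi$ is $1$--Lipschitz and $P^*$ is isometric,
\[
d_\T\bigl({\bf\Omega}(x_k,y_k,t_k),{\bf\Omega}(x_k',y_k',t_k)\bigr) \geq d_\T\bigl({\bf W}(x_k,t_k),{\bf W}(x_k',t_k)\bigr) \to \infty
\]
by the inductive Property~(\ref{L:crit:prop}) for ${\bf W}$. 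Otherwise $e^{-t_k}|x_k-x_k'|$ stays bounded while $e^{-t_k}|y_k-y_k'|\to\infty$; the already-verified Property~(\ref{L:crit:unif}) with Inequality~(\ref{E:alt3}) uniformly bounds $d_\T({\bf\Omega}(x_k,y_k',t_k),{\bf\Omega}(x_k',y_k',t_k))$, and \refthm{kra} applied to the horocyclic lift $\widetilde\gamma_{t_k}^{x_k}\from \RR \to \widetilde\Omega_{t_k}^{x_k} \subset \T(\Omega,\omega)$ forces $d_\T(\widetilde\gamma_{t_k}^{x_k}(y_k),\widetilde\gamma_{t_k}^{x_k}(y_k')) \to \infty$ via properness of $h$. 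The triangle inequality then completes the verification, and \reflem{criteria} delivers the theorem.
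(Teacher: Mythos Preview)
Your verification of Properties~\ref{L:crit:geod}, \ref{L:crit:slim}, thickness, quasi-convexity, and Property~\ref{L:crit:prop} is correct and matches the paper's sketch. The gap is in your argument for Property~\ref{L:crit:unif}.

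You correctly reduce, via \refprop{1-lip-teich}, to bounding a distance in the auxiliary Teichm\"uller space $\T(W,\{w,v\})$ of $W_t^x$ with \emph{two} marked points. You then nudge $y$ so the second marked point is far from $w_t^x$, and propose to ``invoke the inductive Property~\ref{L:crit:unif} for ${\bf W}$ to move $x$ to $x'$ at uniformly bounded cost.'' But the inductive Property~\ref{L:crit:unif} for ${\bf W}$ is a bound in $\T(W,w)$, not in $\T(W,\{w,v\})$. A bounded-dilatation map $(W_t^x,w_t^x)\to(W_t^{x'},w_t^{x'})$ need not send the second marked point anywhere near where it should go, and you have no control over its image. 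The nudge only helps when the move from $x$ to $x'$ is realized by a homeomorphism supported in a small ball around the first marked point; that was true in the $\HH^3$ case because the underlying closed surface $Z_t$ was \emph{fixed} and the move was literally a point-push. Here $W_t^x$ genuinely changes with $x$, and the black-box inductive bound gives you only an abstract quasi-conformal map, not a local push.

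The paper's remedy is to abandon the single-step induction for this property and unwind the full tower of branched covers $\Omega_{n-1}\to\Omega_{n-2}\to\cdots\to\Omega_1$. To vary the coordinate $x_j$, one descends to the auxiliary surface $\Phi_t(\Omega_j^{x_1,\ldots,x_{j-1}})$, which is independent of $x_j,\ldots,x_{n-1}$; on that \emph{fixed} surface the move $x_j\mapsto x_j'$ is again a genuine point-push. The catch is that one must now track the images of \emph{all} higher marked points $x_{j+1},\ldots,x_{n-1}$ as additional marked points on this auxiliary surface, nudging each of them out of the way before the push and back afterward. This costs $2k-1$ point pushes when varying $x_{n-k}$, all with uniformly bounded Teichm\"uller displacement, and \refprop{1-lip-teich} transports the bound back up the tower.
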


\begin{proof}[Sketch of proof]
Again, we must verify the hypotheses of Lemma \ref{L:criteria} and
prove that the image of ${\bf \Omega}$ is quasi-convex in the thick
part, assuming that this is true in all previous steps of the
construction.

We can argue exactly as in the case of $\HH^3$ to prove
Properties~\ref{L:crit:geod} and~\ref{L:crit:slim} of
\reflem{criteria} as well as the fact that the image of ${\bf \Omega}$
is quasi-convex in the thick part.  Property~\ref{L:crit:unif}
requires more care.  However, once established,
Property~\ref{L:crit:prop} follows formally, just as in the case of
$\HH^3$.

We elaborate on the proof that Property~\ref{L:crit:unif} holds for
some $\epsilon$ and $R$.  For this, we must give a more precise
description of the construction.  Write $\Omega_{n-1} = \Omega$,
$\Omega_{n-2} = W$ and
\[ 
P_{n-2} = P \from \Omega_{n-1} \to \Omega_{n-2}
\]
for the branched cover used in the construction.  Inductively, we have
a tower of branched covers
\[ 
\xymatrix{ \Omega_{n-1} \ar[r]^{P_{n-2}} & \Omega_{n-2}
  \ar[r]^{P_{n-3}} & \cdots \ar[r] & \Omega_2 \ar[r]^{P_1} & \Omega_1}
\]
In this tower, $P_j$ is nontrivially branched at every point
$P_j^{-1}(\omega_j)$ where $\omega_j \in \Omega_j$ is the marked
point.  To clarify, we note that $\Omega_1 = Z$, $\omega_1 = z$,
$\Omega_2 = \Sigma$ and $\omega_2 = \sigma$ from the preceding
discussion.

We also have a quadratic differential $\nu_1$ on $\Omega_1$ (this is
$\nu_1 = q$ from before), which pulls back via all the branched covers
to quadratic differentials $\nu_i = P_{i-1}^*(\nu_{i-1}) \in
\Q(\Omega_i)$.  On $\Omega_1$, we have chosen $n-1$ distinct
nonsingular leaves from the vertical foliation of $\nu_1$ which we
denote $\{\zeta_i \from \RR \to \Omega_1\}_{i=1}^{n-1}$.  These
leaves are parametrized by arc-length so that $\zeta_j(0) = P_1 \circ
P_2 \circ \cdots \circ P_{j-1}(\omega_j)$.  

Recall that $y = x_{n-1}$.  We can now describe $\Omega(x, y, t) =
\Omega(x_1,\ldots,x_{n-2},x_{n-1},t)$ for any $(x,y,t) \in \HH^n$.
At the bottom of the tower we push $\omega_1$ along $\zeta_1$ to
$\zeta_1(x_1)$, then take the branched cover $\Omega_2^{x_1} \to
(\Omega_1,\zeta_1(x_1))$ induced by $P_1$ (it is the {\em induced}
branched cover since it branches over $\zeta_1(x_1)$ rather than over
$\zeta_1(0) = \omega_1$; see Section \ref{S:branched}).  Next, the
lifted marking identifies $\omega_2$ with a point in the preimage of
$\zeta_2(0)$, and we push this along an appropriate lift
$\zeta_2^{x_1}$ of $\zeta_2$ to a point $\zeta_2^{x_1}(x_2)$ in the
preimage of $\zeta_2(x_2)$.  At the next level, there is an branched
cover $\Omega_3^{x_1,x_2} \to (\Omega_2^{x_1},\zeta_2^{x_1}(x_2))$
induced by $P_2$.  The lifted marking identifies $\omega_3$ with a
point in the preimage of $\zeta_3(0)$ in the composition of branched
covers $\Omega_3^{x_1,x_2} \to \Omega_2^{x_1} \to \Omega_1$ and we
push this along an appropriate lift $\zeta_3^{x_1,x_2}$ of $\zeta_3$
to a point $\zeta_3^{x_1,x_2}(x_3)$ in the preimage of $\zeta_3(x_3)$.
We continue in this way to produce a tower of branched covers induced
by $P_1, P_2, \ldots, P_{n-3}, P_{n-2}$:
\[ 
\xymatrix{ \Omega_{n-1}^{x_1,\ldots,x_{n-2}} \ar[r] &
  \Omega_{n-2}^{x_1,\ldots,x_{n-3}} \ar[r] & \cdots \ar[r] &
  \Omega_3^{x_1,x_2} \ar[r] & \Omega_2^{x_1} \ar[r] & \Omega_1 .}
\]
The point $\omega_{n-1}$ is identified with a marked point in
$\Omega_{n-1}^{x_1,\ldots,x_{n-2}}$ in the preimage of
$\zeta_{n-1}(0)$, and then we push this point along an appropriate
lift $\zeta_{n-1}^{x_1,\ldots,x_{n-2}}$ of $\zeta_{n-1}$ to the point
$\zeta_{n-1}^{x_1,\ldots,x_{n-2}}(y) =
\zeta_{n-1}^{x_1,\ldots,x_{n-2}}(x_{n-1})$.  With this notation
\[ 
{\bf \Omega}(x,y,0) =
{\bf \Omega}(x_1,\ldots,x_{n-2},x_{n-1},0) =
(\Omega_{n-1}^{x_1,\ldots,x_{n-2}},\zeta_{n-1}^{x_1,\ldots,x_{n-2}}(x_{n-1})).
\]
To find ${\bf \Omega}(x,y,t)$ for any $t$, we apply
the appropriate Teichm\"uller deformation to ${\bf
\Omega}(x,y,0)$.  This is the Teichm\"uller deformation
determined by $t$ and the pull back of $\nu_1$ (via the composition of
branched covers).  We can pull back $\nu_1$ by any of the branched
covers, and since the resulting quadratic differential depends only on
the surface in this construction, we will simply write $\Phi_t$ for
the associated Teichm\"uller deformation on any of the surfaces
$\Omega_j^{x_1,\ldots,x_{j-1}}$.  In particular, we have
\[
{\bf \Omega}(x,y,t) = \Phi_t({\bf
  \Omega}(x,y,0)).
\]

Set $x' = (x_1', x_2', \ldots, x_{n-2}')$.  We now must find an
$\epsilon$ and $R$ so that if
\[ 
e^{-t}|(x,y)  - (x',y')| \leq \epsilon 
\]
then
\[ 
d_\T\left({\bf \Omega}(x,y,t),{\bf
  \Omega}(x',y',t)\right) \leq R. 
\]
As in the case of $\HH^3$, appealing to the triangle inequality it
suffices to find an $\epsilon$ and $R'$ so that if
$(x_1,\ldots,x_{n-2},y)$ and $(x_1',\ldots,x_{n-2}',y')$ agree in all
but one coordinate, and in that coordinate differ by at most
$\epsilon$, then
\[ 
d_\T\left({\bf \Omega}(x,y,t), {\bf \Omega}(x',y',t)\right) \leq R'. 
\]

If $(x,y)$ and $(x',y')$ differ only in the last coordinate, then we
can apply Theorem \ref{T:kra} just as before to produce $\epsilon = 1$
and $R' = A+B$.  Suppose instead that $y = y'$ and $x$ differs from
$x'$ in the $n-2$--coordinate only.  We start at the highest
coordinate, $y = x_{n-1}$ and work two steps down to $x_{n-2}$.  
The idea is similar to what was done in varying $x$ in $(x,y,t) \in
\HH^3$.  We look on $\Phi_t(\Omega_{n-2}^{x_1,\ldots,x_{n-3}})$ as an
``auxiliary'' surface when it is equipped with the {\em two} marked
points $\Phi_t(\zeta_{n-2}^{x_1,\ldots,x_{n-3}}(x_{n-2}))$ and the
image of $\Phi_t(\zeta_{n-1}^{x_1,\ldots,x_{n-2}}(x_{n-1}))$ via the
branched covering
\[ 
\Phi_t(\Omega_{n-1}^{x_1,\ldots,x_{n-2}}) \to
\Phi_t(\Omega_{n-2}^{x_1,\ldots,x_{n-3}}).
\]
If these two points are not too close, then we can move from
$\Phi_t(\zeta_{n-2}^{x_1,\ldots,x_{n-3}}(x_{n-2}))$ to
$\Phi_t(\zeta_{n-2}^{x_1,\ldots,x_{n-3}}(x_{n-2}'))$ keeping the other
marked point fixed, and the distance between these two points in the
Teichm\"uller space of the auxiliary surface with two marked points is
uniformly bounded.  Since the branched cover induces a $1$--Lipschitz
map (compare \eqref{E:branchedforgetbounds}), this means that
\[ 
d_\T\left({\bf \Omega}(x_1,\ldots,x_{n-3},x_{n-2},x_{n-1},t),{\bf
  \Omega}(x_1,\ldots,x_{n-3},x_{n-2}',x_{n-1},t)\right)
\]
is uniformly bounded.

On the other hand, if the two marked points in
$\Phi_t(\Omega_{n-2}^{x_1,\ldots,x_{n-3}})$ are close, we 
\[
\mbox{move} \quad \Phi_t(\zeta_{n-1}^{x_1,\ldots,x_{n-2}}(x_{n-1})) \quad
\mbox{to}   \quad \Phi_t(\zeta_{n-1}^{x_1,\ldots,x_{n-2}}(x_{n-1}')),
\]
\[
\mbox{move} \quad \Phi_t(\zeta_{n-2}^{x_1,\ldots,x_{n-3}}(x_{n-2})) \quad
\mbox{to}   \quad \Phi_t(\zeta_{n-2}^{x_1,\ldots,x_{n-3}}(x_{n-2}')),
\]
and then 
\[
\mbox{move}    \quad \Phi_t(\zeta_{n-1}^{x_1,\ldots,x_{n-2}'}(x_{n-1}')) \quad 
\mbox{back to} \quad \Phi_t(\zeta_{n-1}^{x_1,\ldots,x_{n-2}'}(x_{n-1})).
\]
By the triangle inequality, we obtain the desired uniform bound on
\[ 
d_\T\left({\bf \Omega}(x_1,\ldots,x_{n-3},x_{n-2},x_{n-1},t),{\bf
  \Omega}(x_1,\ldots,x_{n-3},x_{n-2}',x_{n-1},t)\right).
\]
Note that this required three point pushes in two different auxiliary
surfaces.  We varied the $(n-1)^{\rm st}$ coordinate twice, in
the highest surface, and varied the $(n-2)^{\rm nd}$ coordinate once.

Now suppose that $x$ differs from $x'$ in the $(n-3)^{\rm rd}$
coordinate only.  We view $\Phi_t(\Omega_{n-3}^{x_1,\ldots,x_{n-4}})$
as an auxiliary surface with {\em three marked points}: the images of
the points $\Phi_t(\zeta_{n-1}^{x_1,\ldots,x_{n-2}}(x_{n-1}))$ and
$\Phi_t(\zeta_{n-2}^{x_1,\ldots,x_{n-3}}(x_{n-2}))$ under the
respective branched covers and the point
$\Phi_t(\zeta_{n-3}^{x_1,\ldots,x_{n-4}}(x_{n-3}))$.  We can move this
last point a small amount, changing the Teichm\"uller distance a
bounded amount, provided the other two points, higher in the tower,
are not too close to it.  If they are too close, we first move them
out of the way (as in the first two pushes above), move the third
point, then move the two higher points back.  The triangle inequality
together with the $1$--Lipschitz property of the branched cover map
applied as before, implies a uniform bound on the change in
Teichm\"uller distance
\[ 
d_\T\left({\bf \Omega}(x_1,\ldots,x_{n-3},x_{n-2},x_{n-1},t),{\bf
  \Omega}(x_1,\ldots,x_{n-3}',x_{n-2},x_{n-1},t)\right).
\]
It follows that varying $x_{n-3}$ requires at most five point pushes
in the three highest auxiliary surfaces.

In general, varying $x_{n-k}$ in this way requires $2k - 1$ point
pushes in the $k$ highest auxiliary surfaces.  Thus we can change any
coordinate by a small amount $\epsilon$ and change the Teichm\"uller
distance by a bounded amount $R'$, as required. This completes the
sketch of the proof of \refthm{Hn_isometric}.
\end{proof}

\bibliography{hyp_teich}{}
\bibliographystyle{plain}

\end{document}